\documentclass[11pt]{article}
\usepackage{hyperref}
\usepackage{titling}
\newcommand{\subtitle}[1]{%
  \posttitle{%
    \par\vspace{-.25cm}\end{center}
    \begin{center}\large#1\end{center}
    \vskip0.5em}%
}

\usepackage{amsmath}
\usepackage{amsthm}
\usepackage{amsfonts}
\usepackage{amssymb}
\usepackage{comment}
\usepackage{hyperref}
\usepackage[margin=1in]{geometry}
\usepackage{graphicx}
\usepackage{epstopdf}
\usepackage{youngtab}

\DeclareMathOperator{\PP}{\mathbb{P}}
\DeclareMathOperator{\CC}{\mathbb{C}}
\DeclareMathOperator{\RR}{\mathbb{R}}

\DeclareMathOperator{\ZZ}{\mathbb{Z}}
\DeclareMathOperator{\Sch}{\mathfrak{S}}

\DeclareMathOperator{\isom}{\cong}

\DeclareMathOperator{\Gr}{Gr}
\DeclareMathOperator{\Fl}{Fl}
\DeclareMathOperator{\GL}{GL}
\DeclareMathOperator{\hook}{hook}
\DeclareMathOperator{\inv}{inv}
\newtheorem{theorem}{Theorem}[section]
\newtheorem{problem}[theorem]{Open Problem}
\newtheorem{lemma}[theorem]{Lemma}
\newtheorem{question}[theorem]{Question}
\newtheorem{corollary}[theorem]{Corollary}

\theoremstyle{definition}

\newtheorem{fact}[theorem]{Fact}
\newtheorem{definition}[theorem]{Definition}
\newtheorem{exercise}[theorem]{Exercise}
\newtheorem{remark}[theorem]{Remark}
\newtheorem{example}[theorem]{Example}

\title{Variations on a Theme of Schubert Calculus}

\author{Maria Gillespie\thanks{Supported by NSF MSPRF grant PDRF 1604262.}  \\ \href{mailto:mgillespie@math.ucdavis.edu}{mgillespie@math.ucdavis.edu}}

\date{\today}
\begin{document}

\maketitle{}

\begin{abstract}
	In this tutorial, we provide an overview of many of the established combinatorial and algebraic tools of Schubert calculus, the modern area of enumerative geometry that encapsulates a wide variety of topics involving intersections of linear spaces.  It is intended as a guide for readers with a combinatorial bent to understand and appreciate the geometric and topological aspects of Schubert calculus, and conversely for geometric-minded readers to gain familiarity with the relevant combinatorial tools in this area. 
	
	We lead the reader through a tour of three variations on a theme: Grassmannians, flag varieties, and orthogonal Grassmannians.  The orthogonal Grassmannian, unlike the ordinary Grassmannian and the flag variety, has not yet been addressed very often in textbooks, so this presentation may be helpful as an introduction to type B Schubert calculus.
	
	This work is adapted from the author's lecture notes for a graduate workshop during the Equivariant Combinatorics Workshop at the Center for Mathematics Research, Montreal, June 12-16, 2017.
\end{abstract}

\section{Introduction}\label{sec:intro}

Schubert calculus was invented as a general method for solving linear intersection problems in Euclidean space.  One very simple example of a linear intersection problem is the following:  How many lines pass through two given points in the plane?  

It is almost axiomatically true that the answer is $1$, as long as the points are distinct (otherwise it is $\infty$).  Likewise, we can ask how many points are contained in two lines in the plane.  The answer is also usually $1$, though it can be $0$ if the lines are parallel, or $\infty$ if the lines are equal.  

In higher dimensions the answers may change: in three-dimensional space, there are most often zero points of intersection of two given lines.  One can also consider more complicated intersection problems involving subspaces of Euclidean space.  For instance, how many planes in $4$-space contain a given line and a given point?  Generically, the answer will be $1$, but in degenerate cases (when the point is on the line) may be $\infty$.

It seems that the answers to such problems are often $1$, $0$, or $\infty$, but this is not always the case.  Here is the classic example of Schubert calculus, where the answer is generically $2$:

\begin{question}\label{lines}
	How many lines intersect four given lines in three-dimensional space?
\end{question}

Hermann Schubert's 19th century solution to this question\footnote{See \cite{Schubert} for Schubert's original work, or \cite{Ronga} for a modern exposition on Schubert's methods.} would have invoked what he called the ``Principle of Conservation of Number'' as follows.  Suppose the four lines $l_1,l_2,l_3,l_4$ were arranged so that $l_1$ and $l_2$ intersect at a point $P$, $l_2$ and $l_3$ intersect at $Q$, and none of the other pairs of lines intersect and the planes $\rho_1$ and $\rho_2$ determined by $l_1,l_2$ and $l_3,l_4$ respectively are not parallel.  Then $\rho_1$ and $\rho_2$ intersect at another line $\alpha$, which necessarily intersects all four lines.  The line $\beta$  through $P$ and $Q$ also intersects all four lines, and it is not hard to see that these are the only two in this case.  

Schubert would have said that since there are two solutions in this configuration, there are two for every configuration of lines for which the number of solutions is finite, since the solutions can be interpreted as solutions to polynomial equations over the complex numbers.   The answer is indeed preserved in this case, but the lack of rigor in this method regarding multiplicities led to some errors in computations in harder questions of enumerative geometry. 

The following is an example of a more complicated enumerative geometry problem, which is less approachable with elementary methods.

\begin{question}\label{complicated} 
	How many $k$-dimensional subspaces of $\mathbb{C}^{n}$ intersect each of $k\cdot (n-k)$ fixed subspaces of dimension $n-k$ nontrivially?  
\end{question}

Hilbert's 15th problem asked to put Schubert's enumerative methods on a rigorous foundation.  This led to the modern-day theory known as Schubert calculus.

The main idea, going back to Question \ref{lines}, is to let $X_i$ be the space of all lines $L$ intersecting $l_i$ for each $i=1,\ldots,4$.  Then the intersection $X_1\cap X_2\cap X_3\cap X_4$ is the set of solutions to our problem.  Each $X_i$ is an example of a \textit{Schubert variety}, an algebraic and geometric object that is essential to solving these types of intersection problems.

\subsection{`Variations on a Theme'}

This tutorial on Schubert calculus is organized as a theme and variations\footnote{A play on words that references the shared surname with musical composer Franz Schubert, who also lived in Germany in the 19th century.}.  In particular, after briefly recalling some of the necessary geometric background on projective spaces in Section \ref{sec:background} (which may be skipped or skimmed over by readers already familiar with these basics), we begin in Section \ref{sec:theme} (the `Theme') with the foundational ideas of Schubert calculus going back to Schubert \cite{Schubert}.  This includes a rigorous development of Schubert varieties in the \textit{Grassmannian}, the set of all $k$-dimensional subspaces of a fixed $n$-dimensional space, and a more careful geometric analysis of the elementary intersection problems mentioned in the introduction.  We also establish the basic properties of the Grassmannian.  Much of this background material can also be found in expository sources such as \cite{Fulton},  \cite{KleimanLaksov}, and \cite{Manivel}, and much of the material in the first few sections is drawn from these works.

In Variation 1 (Section \ref{sec:variation1}), we present the general formulas for intersecting complex Schubert varieties, and show how it relates to calculations in the cohomology of the Grassmannian as well as products of Schur functions.  Variation 2 (Section \ref{sec:variation2}) repeats this story for the complete flag variety (in place of the Grassmannian), with the role of Schur functions replaced by the Schubert polynomials.  Finally, Variation 3 (Section \ref{sec:variation3}) explores Schubert calculus in the ``Lie type B'' Grassmannian, known as the \textit{orthogonal Grassmannian}.

There are countless more known variations on the theme of classical Schubert calculus, including Grassmannians in the remaining Lie types, partial flag varieties,  and Schubert varieties over the real numbers.  There is also much that has yet to be explored.  We conclude with an overview of some of these potential further directions of study in Section \ref{sec:conclusion}.

\subsection{Acknowledgments}

The author thanks Jennifer Morse, Fran\c{c}ois Bergeron, Franco Saliola, and Luc Lapointe for the invitation to teach a graduate workshop on Schubert calculus at the Center for Mathematics Research in Montreal, for which these extended notes were written.  Thanks also to Sara Billey and the anonymous reviewer for their extensive feedback.  Thanks to Helene Barcelo, Sean Griffin, Philippe Nadeau, Alex Woo, Jake Levinson, and Guanyu Li for further suggestions and comments.   Finally, thanks to all of the participants at the graduate workshop for their comments, questions, and corrections that greatly improved this exposition.

\section{Background on Projective Space}\label{sec:background}

  The notion of \textit{projective space} helps clean up many of the ambiguities in the question above.  For instance, in the projective plane, parallel lines meet, at a ``point at infinity''.\footnote{Photo of the train tracks downloaded from edupic.net.}   It also is one of the simplest examples of a Schubert variety.
  
  \begin{figure}[h]
  	\begin{center}
  	\includegraphics[width=200pt]{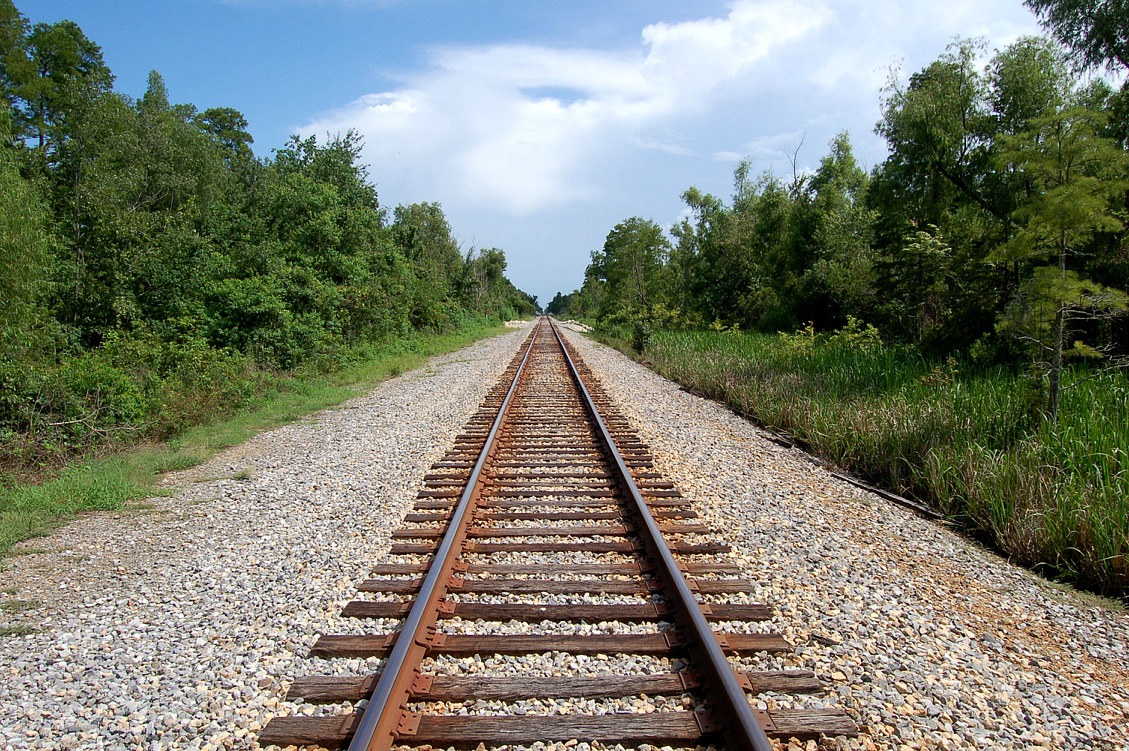}
  	\caption{Parallel lines meeting at a point at infinity.}
    \end{center}
  \end{figure}
  
  One way to define projective space over a field $k$ is as the set of lines through the origin in one higher dimensional space as follows.
  
  \begin{definition}
  	The $n$-dimensional \textbf{projective space} $\PP^n_k$ over a field $k$ is the set of equivalence classes in $k^{n+1}\setminus\{(0,0,\ldots,0)\}$ with respect to the relation $\sim$ given by scalar multiplication, that is, $$(x_0,x_1,\ldots,x_n)\sim (y_0,y_1,\ldots,y_n)$$ if and only if there exists $a\in k\setminus \{0\}$ such that $ax_i=y_i$ for all $i$.  We write $(x_0:x_1:\cdots:x_n)$ for the equivalence class in $\PP^k$ containing $(x_0,\ldots,x_n)$, and we refer to $(x_0:x_1:\cdots:x_n)$ as a \textit{point} in $\PP^k$ in \textit{homogeneous coordinates}.
  \end{definition}
  
  Note that \textit{a point in $\PP^n_k$ is a line through the origin in $k^{n+1}$}.  In particular, a line through the origin consists of all scalar multiples of a given nonzero vector.
  
  Unless we specify otherwise, we will always use $k=\mathbb{C}$ and simply write $\PP^n$  for $\PP^n_{\CC}$ throughout these notes.
  
  \begin{example}
  	In the ``projective plane'' $\PP^2$, the symbols $(2:0:1)$ and $(4:0:2)$ both refer to the same point.    
  \end{example}
  
  It is useful to think of projective space as having its own geometric structure, rather than just as a quotient of a higher dimensional space.  In particular, a \textbf{geometry} is often defined as a set along with a group of transformations.  A \textbf{projective transformation} is a map $f:\mathbb{P}^n\to \mathbb{P}^n$ of the form $$f(x_0:x_1:\cdots:x_n)=(y_0:y_1:\cdots:y_n)$$ where for each $i$, $$y_i=a_{i0}x_0+a_{i1}x_1+\cdots a_{in}x_n$$ for some fixed constants $a_{ij}\in \mathbb{C}$ such that the $(n+1)\times (n+1)$ matrix $(a_{ij})$ is invertible.  
  
  Notice that projective transformations are well-defined on $\mathbb{P}^n$ because scaling all the $x_i$ variables by a constant $c$ has the effect of scaling the $y$ variables by $c$ as well.  This is due to the fact that the defining equations are \textbf{homogeneous}: every monomial on both sides of the equation has a fixed degree $d$ (in this case $d=1$).    
  
  \subsection{Affine patches and projective varieties}
  
  There is another way of thinking of projective space: as ordinary Euclidean space with extra smaller spaces placed out at infinity.  For instance, in $\PP^1$, any point $(x:y)$ with $y\neq 0$ can be rescaled to the form $(t:1)$.  All such points can be identified with the element $t\in \mathbb{C}$, and then there is only one more point in $\mathbb{P}^1$, namely $(1:0)$.  We can think of $(1:0)$ as a point ``at infinity'' that closes up the \textit{affine line} $\CC^1$ into the ``circle'' $\PP^1$.  Thought of as a real surface, the complex $\PP^1$ is actually a sphere.
  
  Similarly, we can instead parameterize the points $(1:t)$ by $t\in \CC^1$ and have $(0:1)$ be the extra point.  The subsets given by $\{(1:t)\}$ and $\{(t:1)\}$ are both called \textbf{affine patches} of $\PP^1$, and form a cover of $\PP^1$, from which we can inherit a natural topology on $\PP^1$ from the Euclidean topology on each $\CC^1$.  In fact, the two affine patches form an open cover in this topology, so $\PP^1$ is compact.
  
  As another example, the projective plane $\PP^2$ can be written as the disjoint union $$\{(x:y:1)\}\sqcup \{(x:1:0)\}\sqcup \{1:0:0\}=\CC^2\sqcup \CC^1\sqcup \CC^0,$$ which we can think of as a certain closure of the affine patch $\{(x:y:1)\}$.  The other affine patches are $\{(x:1:y)\}$ and $\{(1:x:y)\}$ in this case.
  
  We can naturally generalize this as follows.
  
  \begin{definition}
   The \textbf{standard affine patches} of $\PP^n$ are the sets $$\{(t_0:t_1:\cdots:t_{i-1}:1:t_{i+1}:\cdots:t_n)\}\isom \CC^n$$ for $i=0,\ldots,n$.  
  \end{definition} 
  
  An \textbf{affine variety} is usually defined as the set of solutions to a set of polynomials in $k^n$ for some field $k$.  For instance, the graph of $y=x^2$ is an affine variety in $\mathbb{R}^2$, since it is the set of all points $(x,y)$ for which $f(x,y)=y-x^2$ is zero. 
        \begin{center}
          \includegraphics{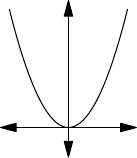}
        \end{center}
  
   In three-dimensional space, we might consider the plane defined by the \textit{zero locus} of $f(x,y,z)=x+y+z$, that is, the set of solutions to $f(x,y,z)=0$.  Another example is the line $x=y=z$ defined by the common zero locus of $f(x,y,z)=x-y$ and $g(x,y,z)=x-z$.
  
  Recall that a polynomial is \textbf{homogeneous} if all of its terms have the same total degree.  For instance, $x^2+3yz$ is homogeneous because both terms have degree $2$, but $x^2-y+1$ is not homogeneous.
  
  \begin{definition}
  	A \textbf{projective variety} is the common zero locus in $\PP^n$ of a finite set of homogeneous polynomials $f_1(x_0,\ldots,x_n),\ldots, f_r(x_0,\ldots,x_n)$ in $\PP^n$.  We call this variety $V(f_1,\ldots,f_r)$.  In other words, $$V(f_1,\ldots,f_r)=\{(a_0:\cdots :a_n)\mid f_i(a_0:\cdots:a_n)=0\text{ for all }i \}.$$
  \end{definition}
  
  \begin{remark}
  	  Note that we need the homogeneous condition in order for projective varieties to be well-defined.  For instance, if $f(x,y)=y-x^2$ then $f(2,4)=0$ and $f(4,8)\neq 0$, but $(2:4)=(4:8)$ in $\PP^1$.  So the value of a nonhomogeneous polynomial on a point in projective space is not, in general, well-defined.
  \end{remark}
  
  The intersection of a projective variety with the $i$-th affine patch is the \textit{affine} variety formed by setting $x_i=1$ in all of the defining equations.  For instance, the projective variety in $\PP^2$ defined by $f(x:y:z)=yz-x^2$ restricts to the affine variety defined by $f(x,y)=y-x^2$ in the affine patch $z=1$.  
  
  We can also reverse this process.  The \textbf{homogenization} of a polynomial $f(x_0,\ldots, x_{n-1})$ in $n$ variables using another variable $x_n$ is the unique homogeneous polynomial $g(x_0:\cdots:x_{n-1}:x_n)$ with $\deg(g)=\deg(f)$ for which $$g(x_0:\cdots:x_{n-1}:1)=f(x_0,\ldots,x_{n-1}).$$
  For instance, the homogenization of $y-x^2$ is $yz-x^2$.  If we homogenize the equations of an affine variety, we get a projective variety which we call its \textbf{projective closure}.
  
  \begin{example}
  	 The projective closure of the parabola defined by $y-x^2-1=0$ is the projective variety in $\PP^3$ defined by the equation $yz-x^2-z^2=0$.  If we intersect this with the $y=1$ affine patch, we obtain the affine variety $z-x^2-z^2=0$ in the $x,z$ variables.  This is the circle $x^2+(z-\frac{1}{2})^2=\frac{1}{4}$, and so parabolas and circles are essentially the same object in projective space, cut different ways into affine patches.  
  	 
  	 As explained in more detail in Problem \ref{prob:conics} below, there is only one type of (nondegenerate) conic in projective space.
  \end{example}
  
  \begin{remark}
  	The above example implies that if we draw a parabola on a large, flat plane and stand at its apex, looking out to the horizon we will see the two branches of the parabola meeting at a point on the horizon, closing up the curve into an ellipse.\footnote{Unfortunately, we could not find any photographs of parabolic train tracks.}
  \end{remark}

  \subsection{Points, lines, and $m$-planes in projective space}
  
   Just as the points of $\PP^n$ are the images of lines in $\CC^{n+1}$, a \textbf{line} in projective space can be defined as the image of a \textbf{plane} in $k^{n+1}$, and so on.  We can define these in terms of homogeneous coordinates as follows.
  
  \begin{definition}
  	An \textbf{$(n-1)$-plane} or \textbf{hyperplane} in $\PP^n$ is the set of solutions $(x_0:\cdots: x_n)$ to a homogeneous linear equation $$a_0x_0+a_1x_1+\cdots +a_nx_n=0.$$  A \textbf{$k$-plane} is an intersection of $n-k$ hyperplanes, say $a_{i0}x_0+a_{i1}x_1+\cdots +a_{in}x_n=0$ for $i=1,\ldots,n-k$, such that the matrix of coefficients $(a_{ij})$ is full rank.
  \end{definition}
  
  \begin{example}
  	In the projective plane $\PP^n$, the line $l_1$ given by $2x+3y+z=0$ restricts to the line $2x+3y+1=0$ in the affine patch $z=1$.  Notice that the line $l_2$ given by $2x+3y+2z=0$ restricts to $2x+3y+2=0$ in this affine patch, and is parallel to the restriction of $l_1$ in this patch.  However, the projective closures of these affine lines intersect at the point $(3:-2:0)$, on the $z=0$ line at infinity.
  	
  	In fact, any two distinct lines meet in a point in the projective plane.  In general, intersection problems are much easier in projective space.  See Problem \ref{V2} below to apply this to our problems in Schubert Calculus. 
  \end{example}

  \subsection{Problems}
  
  \begin{enumerate}
  	\item \textbf{Transformations of $\PP^1$:} Show that a projective transformation on $\PP^1$ is uniquely determined by where it sends $0=(0:1)$, $1=(1:1)$, and $\infty=(1:0)$. 
  	
  	\item \textbf{Choice of $n+2$ points stabilizes $\PP^n$:} Construct a set $S$ of $n+2$ distinct points in $\PP^n$ for which any projective transformation is uniquely determined by where it sends each point of $S$.  What are necessary and sufficient conditions for a set of $n+2$ distinct points in $\PP^n$ to have this property?
  	
  	\item\label{prob:conics} \textbf{All conics in $\PP^2$ are the same:} Show that, for any quadratic homogeneous polynomial $f(x,y,z)$ there is a projective transformation that sends it to one of $x^2$, $x^2+y^2$, or $x^2+y^2+z^2$.  Conclude that any two ``nondegenerate'' conics are the same up to a projective transformation.  
  	
  	(Hint: Any quadratic form can be written as $\mathbf{x}A\mathbf{x}^T$ where $\mathbf{x}=(x,y,z)$ is the row vector of variables and $\mathbf{x}^T$ is its transpose, and $A$ is a symmetric matrix, with $A=A^T$.  It can be shown that a symmetric matrix $A$ can be diagonalized, i.e., written as $BDB^T$ for some diagonal matrix $D$.  Use the matrix $B$ as a projective transformation to write the quadratic form as a sum of squares.) 
  
    \item \label{V2} \textbf{Schubert Calculus in Projective Space:} The question of how many points are contained in two distinct lines in $\CC^2$ can be ``projectivized'' as follows: if we ask instead how many points are contained in two distinct lines in $\PP^2$, then the answer is always $1$ since parallel lines now intersect, a much nicer answer!  
    
    Write out projective versions of Questions \ref{lines} and \ref{complicated}.  What do they translate to in terms of intersections of subspaces of one-higher-dimensional affine space?
  \end{enumerate}

\pagebreak

  
\section{Theme: The Grassmannian}\label{sec:theme}

  Not only does taking the projective closure of our problems in $\PP^n$ make things easier, it is also useful to think of the intersection problems as involving subspaces of $\CC^{n+1}$ rather than $k$-planes in $\PP^n$.  The definition of the Grassmannian below is analogous to our first definition of projective space.

  \begin{definition}
  	The \textbf{Grassmannian} $\Gr(n,k)$ is the set of all $k$-dimensional subspaces of $\CC^n$.  
  \end{definition}
  
  As in projective spaces, we call the elements of $\Gr(n,k)$ the ``points'' of $\Gr(n,k)$, even though they are defined as entire subspaces of $\CC^n$.  We will see soon that $\Gr(n,k)$ has the structure of a projective variety, making this notation useful.
  
  Every point of the Grassmannian can be described as the span of some $k$ independent row vectors of length $n$, which we can arrange in a $k\times n$ matrix.  For instance, the matrix 
  $$\left[\begin{array}{ccccccc}
  0 & -1 & -3 & -1 & 6 & -4 & 5 \\
  0 & 1 & 3 & 2 & -7 & 6 & -5 \\
  0 & 0 & 0 & 2 & -2 & 4 & -2 
  \end{array}\right]$$
  represents a point in $\Gr(7,3)$.  Notice that we can perform elementary row operations on the matrix without changing the point of the Grassmannian it represents.  We will use the convention that the pivots will be in order from left to right and bottom to top. 
  
  \begin{exercise}
  	Show that the matrix above has reduced row echelon form: 
  	$$\left[\begin{array}{ccccccc}
  	0 & 0 & 0 & 0 & 0 & 0 & 1 \\
  	0 & 0 & 0 & 1 & \ast & \ast & 0 \\
  	0 & 1 & \ast & 0 & \ast & \ast & 0 \\
  	\end{array}\right],$$ where the $\ast$ entries are certain complex numbers.  
  	
  \end{exercise}
  
  	We can summarize our findings as follows.
  	
  	\begin{fact}
  		Each point of $\Gr(n,k)$ is the row span of a unique full-rank $k\times n$ matrix in reduced row echelon form.
  	\end{fact}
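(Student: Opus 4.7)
The plan is to prove existence and uniqueness separately, both via standard linear algebra arguments adapted to the paper's convention that pivots appear from left to right as we read rows from bottom to top.

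For existence, I would start with an arbitrary point $V\in\Gr(n,k)$, pick any basis $v_1,\ldots,v_k$ of $V$, and stack these vectors as rows of a $k\times n$ matrix $M$, which is automatically full rank. Then I would apply Gaussian elimination (elementary row operations together with row swaps) to reduce $M$ to reduced row echelon form in the stated convention. Since every elementary row operation preserves the row span, the resulting RREF matrix still has row span $V$. The only adjustment from textbook RREF is a reversal of the row order, which is easily handled by swapping rows at the end.

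For uniqueness, suppose $A$ and $B$ are both full-rank $k\times n$ RREF matrices (in the paper's convention) with the same row span $V$. The crucial point is that the set $P=\{p_1<p_2<\cdots<p_k\}$ of pivot columns is an intrinsic invariant of $V$: namely, $p_i$ is the smallest column index $j$ such that the projection $\pi_j\colon V\to\CC^j$ onto the first $j$ coordinates has image of dimension $i$. This characterization depends only on $V$, so $A$ and $B$ have the same pivot columns. Next, I would show that once $P$ is fixed, each row of an RREF matrix with row span $V$ is uniquely determined: the $i$-th row from the bottom must be the unique vector $w_i\in V$ satisfying $w_i(p_i)=1$ and $w_i(p_j)=0$ for $j\neq i$. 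Existence of such a $w_i$ follows from the RREF structure, and uniqueness follows because any two such vectors would differ by a nonzero element of $V$ vanishing on all of $P$, contradicting the fact that the projection $V\to\CC^P$ is an isomorphism (since $|P|=k=\dim V$ and the projection is surjective by construction of $P$). Thus row by row, $A=B$.

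The main obstacle is the uniqueness argument, and specifically the need to verify that the pivot columns are indeed determined by $V$ alone and that the projection $V\to\CC^P$ is injective. These facts are elementary but require care to phrase cleanly with the reversed row convention; once they are in hand, the rest of the proof is routine bookkeeping. I would not belabor the existence half, which is essentially Gaussian elimination, and would devote most of the writeup to a clean statement of the pivot characterization and the resulting rigidity of each row.
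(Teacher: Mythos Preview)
Your proposal is correct. The paper itself does not prove this statement at all: it is labeled a \emph{Fact} and presented as a summary of the preceding discussion and exercise, with the implicit understanding that it is standard linear algebra. Your existence argument via Gaussian elimination and your uniqueness argument via the intrinsic characterization of the pivot set $P$ (as the jumps in $\dim\pi_j(V)$) together with the isomorphism $V\to\CC^P$ constitute exactly the kind of clean proof one would supply if asked to fill in the details, and they are more than the paper offers. The only comment is calibration: since the paper treats this as background, a full writeup at the level of detail you outline would be disproportionate in context; a one- or two-sentence gesture at Gaussian elimination for existence and the pivot-column invariance for uniqueness would match the paper's expository register.
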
  
  	
  	The subset of the Grassmannian whose points have a particular reduced row echelon form constitutes a \textbf{Schubert cell.}  Notice that $\Gr(n,k)$ is a disjoint union of Schubert cells.

\subsection{Projective variety structure}
  
The Grassmannian can be viewed as a projective variety by embedding $\Gr(n,k)$ in $\PP^{\binom{n}{k}-1}$ via the \textit{Pl\"ucker embedding}.  To do so, choose an ordering on the $k$-element subsets $S$ of $\{1,2,\ldots,n\}$ and use this ordering to label the homogeneous coordinates $x_S$ of $\PP^{\binom{n}{k}-1}$.  Now, given a point in the Grassmannian represented by a matrix $M$, let $x_S$ be the determinant of the $k\times k$ submatrix determined by the columns in the subset $S$.  This determines a point in projective space since row operations can only change the determinants up to a constant factor, and the coordinates cannot all be zero since the matrix has rank $k$.  

For example, in $\Gr(4,2)$, the matrix $$\left[\begin{array}{cccc}
0 & 0 & 1 & 2 \\
1 & -3 & 0 & 3
\end{array} \right]$$
has Pl\"ucker coordinates given by the determinants of all the $2\times 2$ submatrices formed by choosing two of the columns above.  We write $x_{ij}$ for the determinant formed columns $i$ and $j$, so for instance, $x_{24}=\det\left(\begin{array}{cc}
0 & 2 \\
-3 & 3
\end{array}\right)=6$.  If we order the coordinates $(x_{12}:x_{13}:x_{14}:x_{23}:x_{24}:x_{34})$ then the image of the above point under the Pl\"ucker embedding is is $(0:-1:-2:3:6:3)$.

One can show that the image is a projective variety in $\PP^{\binom{n}{k}-1}$, cut out by homogeneous quadratic relations in the variables $x_S$ known as the \textit{Pl\"ucker relations}.  See \cite{CoxLittleOShea}, pg.\ 408 for details.

  \subsection{Schubert cells and Schubert varieties}
 
 To enumerate the Schubert cells in the Grassmannian, we assign to the matrices of the form 
 $$\left[\begin{array}{ccccccc}
 0 & 0 & 0 & 0 & 0 & 0 & 1 \\
 0 & 0 & 0 & 1 & \ast & \ast & 0 \\
 0 & 1 & \ast & 0 & \ast & \ast & 0 \\
 \end{array}\right]$$
 a \textbf{partition}, that is, a nonincreasing sequence of nonnegative integers $\lambda=(\lambda_1,\ldots,\lambda_k)$, as follows.  Cut out the $k\times k$ staircase from the upper left corner of the matrix, and let $\lambda_i$ be the distance from the edge of the staircase to the $1$ in row $i$.  In the example shown, we get the partition $\lambda=(4,2,1)$.  Notice that we always have $\lambda_1\ge \lambda_2\ge \cdots \ge \lambda_k$.
 \begin{center}
 	\includegraphics{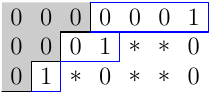}
 \end{center}
 
 \begin{definition}
 	The \textbf{size} of a partition $\lambda$, denoted $|\lambda|$, is $\sum_i \lambda_i$, and its \textbf{length}, denoted $l(\lambda)$, is the number of nonzero parts.  The entries $\lambda_i$ are called its \textbf{parts}.
 \end{definition}	
 
 \begin{remark} 		
 	With this notation, Schubert cells in $\Gr(n,k)$ are in bijection with the partitions $\lambda$ for which $l(\lambda)\le k$ and $\lambda_1\le n-k$.
 \end{remark}
 
 \begin{definition}
 	The \textbf{Young diagram} of a partition $\lambda$ is the left-aligned partial grid of boxes in which the $i$-th row from the top has $\lambda_i$ boxes.  
 \end{definition}
 
 For example, the Young diagram of the partition $(4,2,1)$ that came up in the previous example is shown as the shaded boxes in the diagram below. By identifying the partition with its Young diagram, we can alternatively define $\lambda$ as the complement in a $k\times (n-k)$ rectangle of the diagram $\mu$ defined by the right-aligned shift of the $\ast$ entries in the matrix:
 
 \begin{center}
 	\includegraphics{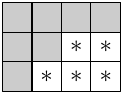}
 \end{center}
 
 Since the $k\times (n-k)$ rectangle is the bounding shape of our allowable partitions, we will call it the \textbf{ambient rectangle}.
 
 \begin{definition}
 	For a partition $\lambda$ contained in the ambient rectangle, the \textbf{Schubert cell} $\Omega_{\lambda}^\circ$ is the set of points of $\Gr(n,k)$ whose row echelon matrix has corresponding partition $\lambda$.  Explicitly, $$\Omega^\circ_{\lambda}=\{V\in \Gr(n,k)\mid \dim(V\cap \langle e_1,\ldots, e_{r}\rangle) = i\text{ for }n-k+i-\lambda_i\le r \le n-k+i-\lambda_{i+1}\}.$$  Here $e_{n-i+1}$ is the $i$-th standard unit vector $(0,0,\ldots,0,1,0,\ldots,0)$ with the $1$ in the $i$-th position, so $e_1=(0,0,\ldots,1)$, $e_2=(0,0,\ldots,1,0)$, and so on.  The notation $\langle e_1,\ldots,e_r\rangle$ denotes the span of the vectors $e_1,\ldots,e_r$.
 \end{definition}

 \begin{remark}
 	The numbers $n-k+i-\lambda_i$ are the positions of the $1$'s in the matrix counted from the right.
 \end{remark}

 Since each $\ast$ can be any complex number, we have $\Omega_{\lambda}^\circ= \CC^{k(n-k)-|\lambda|}$ as a set, and so $$\dim(\Omega_{\lambda}^\circ)=k(n-k)-|\lambda|.$$  In particular the dimension of the Grassmannian is $k(n-k)$.
 
 We are now in a position to define \textbf{Schubert varieties} as closed subvarieties of the Grassmannian.  
 
 \begin{definition}
 	The \textbf{standard Schubert variety} corresponding to a partition $\lambda$, denoted $\Omega_\lambda$, is the set $$\Omega_{\lambda}=\{V\in \Gr(n,k)\mid \dim(V\cap \langle e_1,\ldots, e_{n-k+i-\lambda_i}\rangle) \ge i\}.$$
 \end{definition}
 
 \begin{remark}
 	In the topology on the Grassmannian, as inherited from projective space via the Pl\"ucker embedding, the Schubert variety $\Omega_\lambda$ is the closure $\overline{{\Omega_\lambda}^\circ}$ of the corresponding Schubert cell.  We will explore more of the topology of the Grassmannian in section \ref{sec:variation1}.
 \end{remark}
 
  Note that we have $\dim(\Omega_\lambda)=\dim(\Omega_\lambda^{\circ}) = k(n-k)-|\lambda|$ as well.
 
 \begin{example}
 	Consider the Schubert variety $\Omega_{\tiny\yng(2)}$ in $\PP^5=\Gr(6,1)$.  The ambient rectangle is a $1\times 5$ row of squares.  There is one condition defining the points $V\in \Omega_{\tiny\yng(2)}$, namely $\dim(V\cap \langle e_1,e_2,e_3,e_4\rangle) \ge 1$, where $V$ is a one-dimensional subspace of $\CC^6$.  This means that $V$ is contained in $\langle e_1,\ldots,e_4\rangle $, and so, expressed in homogeneous coordinates, its first two entries (in positions $e_5$ and $e_6$) are $0$.  
 	
 	Thus each point of $\Omega_{\tiny\yng(2)}$ can be written in one of the following forms: \begin{center} $(0:0:1:\ast:\ast:\ast)$ \\ $(0:0:0:1:\ast:\ast)$ \\ $(0:0:0:0:1:\ast)$ \\$(0:0:0:0:0:1)$ \end{center}  It follows that $\Omega_{\tiny\yng(2)}$ can be written as a disjoint union of Schubert cells as follows:
 	$$\Omega_{\tiny\yng(2)}=\Omega^\circ_{\tiny\yng(2)}\sqcup\Omega^\circ_{\tiny\yng(3)}\sqcup\Omega^\circ_{\tiny\yng(4)}\sqcup\Omega^\circ_{\tiny\yng(5)}.$$
 	In fact, every Schubert variety is a disjoint union of Schubert cells.  See the problems at the end of this section for details.
 \end{example}

 We may generalize this construction to other bases than the standard basis $e_1,\ldots,e_n$, or more rigorously, using any \textit{complete flag}.  A \textbf{complete flag} is a chain of subspaces $$F_{\bullet}: 0=F_0\subset F_1\subset\cdots \subset F_n=\CC^n$$ where each $F_i$ has dimension $i$.  Then we define 
 $$\Omega_{\lambda}(F_\bullet)=\{V\in \Gr(n,k)\mid \dim(V\cap F_{n-k+i-\lambda_i})\ge i\}$$ and similarly for $\Omega^\circ_\lambda$.
 
 \begin{example}
 	The Schubert variety $\Omega_{\square}(F_\bullet)\subset \Gr(4,2)$ consists of the $2$-dimensional subspaces $V$ of $\CC^4$ for which $\dim(V\cap F_{2})\ge 1$.  Under the quotient map $\CC^4\to \PP^3$, this is equivalent to space of all lines in $\PP^3$ that intersect a given line in at least a point, which is precisely the variety we need for Question \ref{lines}.
 \end{example}
 
 \subsection{A note on flags}
 
     Why are chains of subspaces called \textit{flags}?  Roughly speaking, a flag on a flagpole consists of:
     \begin{itemize}
     	\item A point (the top of the pole),
     	\item A line passing through that point (the pole), 
     	\item A plane passing through that line (the plane containing the flag), and 
     	\item Space to put it in.
     \end{itemize}
     
     Mathematically, this is the data of a \textit{complete flag} in three dimensions.  However, higher-dimensional beings would require more complicated flags.  So in general, it is natural to define a complete flag in $n$-dimensional space $\mathbb{C}^n$ to be a chain of vector spaces $F_i$ of each dimension from $0$ to $n$, each containing the previous, with $\dim(F_i)=i$ for all $i$.  A \textbf{partial flag} is a chain of subspaces in which only some of the possible dimensions are included.

  \subsection{Problems}
  
  \begin{enumerate}
  	\item \textbf{Projective space is a Grassmannian:} Show that every projective space $\PP^m$ is a Grassmannian.  What are $n$ and $k$?
  	
  	\item \textbf{Schubert cells in $\PP^m$:} What are the Schubert cells in $\PP^m$?  Express your answer in homogeneous coordinates.
  	
  	\item \textbf{Schubert varieties in $\PP^m$:}   What are the Schubert varieties in $\PP^m$, thought of as a Grassmannian?  Why are they the closures of the Schubert cells in the topology on $\PP^m$?
    
    \item \textbf{Schubert varieties vs.\ Schubert cells:}	Show that every Schubert variety is a disjoint union of Schubert cells.  Describe which Schubert cells are contained in $\Omega_\lambda$ in terms of partitions.
    
    \item \textbf{Extreme cases:}  Describe $\Omega_{\emptyset}$ and $\Omega_{B}$ where $B$ is the entire ambient rectangle.  What are their dimensions?
    
    \item\label{previous} \textbf{Intersecting Schubert Varieties:} Show that, by choosing four different flags $F^{(1)}_{\bullet}$, $F^{(2)}_{\bullet}$, $F^{(3)}_{\bullet}$, $F^{(4)}_{\bullet}$, Question \ref{lines} becomes equivalent to finding the intersection of the Schubert varieties $$\Omega_{\square}(F^{(1)}_\bullet)\cap \Omega_{\square}(F^{(2)}_\bullet)\cap \Omega_{\square}(F^{(3)}_\bullet)\cap \Omega_{\square}(F^{(4)}_\bullet).$$
    
    \item \textbf{A Variety of Varieties:}  Translate the simple intersection problems of lines passing through two points, points contained in two lines, and so on into problems about intersections of Schubert varieties, as we did for Question \ref{lines} in Problem \ref{previous} above.  What does Question \ref{complicated} become?
    
    \item\label{2-planes} \textbf{More complicated flag conditions:} In $\PP^4$, let $2$-planes $A$ and $B$ intersect in a point $X$, and let $P$ and $Q$ be distinct points different from $X$.  Let $S$ be the set of all $2$-planes $C$ that contain both $P$ and $Q$ and intersect $A$ and $B$ each in a line.  Express $S$ as an intersection of Schubert varieties in $\Gr(5,3)$, in each of the following cases:
    \begin{enumerate}
    	\item When $P$ is contained in $A$ and $Q$ is contained in $B$;
    	\item When neither $P$ nor $Q$ lie on $A$ or $B$.
    \end{enumerate}
    
  \end{enumerate}
  
  \pagebreak
  
  
\section{Variation 1: Intersections of Schubert varieties in the Grassmannian}\label{sec:variation1}

  In the previous section, we saw how to express certain linear intersection problems as intersections of Schubert varieties in a Grassmannian.  We now will build up the machinery needed to obtain a combinatorial rule for computing these intersections, known as the \textbf{Littlewood-Richardson rule}.  
 
  Both the geometric and combinatorial aspects of the Littlewood-Richardson rule are fairly complicated to prove, and we refer the reader to \cite{Fulton} for complete proofs.  In this exposition we will focus more on the applications and intuition behind the rule.  
  
  The Littlewood-Richardson rule is particularly nice in the case of zero-dimensional intersections.  In particular, given a list of generic flags $F^{(i)}_\bullet$ in $\CC^n$ for $i=1,\ldots,r$, let $\lambda^{(1)},\ldots,\lambda^{(r)}$ be partitions with $$\sum |\lambda^{i}|=k(n-k).$$  Then the intersection $$\bigcap \Omega_{\lambda^{i}}(F^{(i)}_\bullet)$$ is zero-dimensional, consisting of exactly $c^{B}_{\lambda^{(1)},\ldots,\lambda^{(r)}}$ points of $\Gr(n,k)$, where $B$ is the ambient rectangle and $c^{B}_{\lambda^{(1)},\ldots,\lambda^{(r)}}$ is a certain \textbf{Littlewood-Richardson coefficient}, defined in Section \ref{sec:LRrule}.
  
  When we refer to a ``generic'' choice of flags, we mean that we are choosing from an open dense subset of the \textit{flag variety}.  This will be made more precise in Section \ref{sec:variation2}.
  
  In general, the Littlewood Richardson rule computes products of Schubert classes in the cohomology ring of the Grassmannian, described in Section \ref{sec:cohomology} below, which corresponds with (not necessarily zero-dimensional) intersections of Schubert varieties.   To gain intuition for these intersections, we follow \cite{Fulton} and first simplify even further, to the case of two flags that intersect \textit{transversely}.
 
  \subsection{Opposite and transverse flags, genericity}
  
  Two subspaces of $\CC^n$ are said to be \textit{transverse} if their intersection has the ``expected dimension''.  For instance, two $2$-dimensional subspaces of $\CC^3$ are expected to have a $1$-dimensional intersection; only rarely is their intersection $2$-dimensional (when the two planes coincide).  More rigorously:
  
  \begin{definition}
  	Two subspaces $V$ and $W$ of $\CC^n$ are \textbf{transverse} if $$\dim(V\cap W)=\max(0, \dim(V) +\dim(W) -n).$$  Equivalently, if $\mathrm{codim}(V)$ is defined to be $n-\dim(V)$, then $$\mathrm{codim}(V\cap W)=\min(n,\mathrm{codim}(V)+\mathrm{codim}(W)).$$
  \end{definition}
  
  \begin{exercise}
  	Verify that the two definitions above are equivalent.
  \end{exercise}

  We say two flags $F^{(1)}_\bullet$ and $F^{(2)}_\bullet$ are \textbf{transverse} if every pair of subspaces $F^{(1)}_i$ and $F^{(2)}_j$ are transverse.  In fact, a weaker condition suffices:
  
  \begin{lemma}\label{lem:opposite}
  	Two complete flags $F_\bullet,E_\bullet\subset \CC^n$ are transverse if and only if $F_{n-i}\cap E_i=\{0\}$ for all $i$.
  \end{lemma}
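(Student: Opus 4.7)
The plan is to prove each direction separately; the forward implication is essentially a specialization of the definition, while the reverse requires two dimension estimates.

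For the forward direction, if the flags are transverse then in particular every pair $F_{n-i}$ and $E_i$ is transverse, so $\dim(F_{n-i}\cap E_i)=\max(0,(n-i)+i-n)=0$.

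For the reverse direction, I would show that under the hypothesis $F_{n-r}\cap E_r=\{0\}$ for all $r$, we have $\dim(F_i\cap E_j)=\max(0,i+j-n)$ for every pair $(i,j)$. The lower bound $\dim(F_i\cap E_j)\ge \max(0,i+j-n)$ is free: since $F_i+E_j\subseteq \CC^n$, the inclusion-exclusion identity $\dim(F_i+E_j)=\dim F_i+\dim E_j-\dim(F_i\cap E_j)$ gives $\dim(F_i\cap E_j)\ge i+j-n$, and the intersection dimension is nonnegative.

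For the matching upper bound, I would split into cases. If $i+j\le n$, then $i\le n-j$, so $F_i\subseteq F_{n-j}$, and hence $F_i\cap E_j\subseteq F_{n-j}\cap E_j=\{0\}$ by hypothesis, giving the desired bound $0$. If $i+j>n$, set $m=i+j-n$; then $n-j<i$, so $F_{n-j}\subseteq F_i$ is a subspace of codimension $m$ inside $F_i$. The composition
\[
F_i\cap E_j \ \hookrightarrow\ F_i \ \twoheadrightarrow\ F_i/F_{n-j}
\]
has kernel $F_i\cap E_j\cap F_{n-j}=F_{n-j}\cap E_j=\{0\}$, so it is injective, giving $\dim(F_i\cap E_j)\le \dim(F_i/F_{n-j})=m=i+j-n$. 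Combining the two inequalities yields transversality of every pair $(F_i,E_j)$.

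There isn't really a hard step here; the only thing to watch is the two-case split in the upper bound and the direction of the inclusion $F_{n-j}\subseteq F_i$ versus $F_i\subseteq F_{n-j}$, which flips at $i+j=n$. The entire argument is just a clean dimension count leveraging the single hypothesis $F_{n-r}\cap E_r=\{0\}$ twice: once directly (when $i+j\le n$) and once through a quotient (when $i+j>n$).
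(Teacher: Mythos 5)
Your proof is correct, and it is complete in a place where the paper only gives a sketch. Both directions check out: the forward implication is indeed just the definition applied to the pair $(F_{n-i},E_i)$, the lower bound $\dim(F_i\cap E_j)\ge\max(0,i+j-n)$ holds for any two subspaces, and your two-case upper bound correctly uses the hypothesis with $r=j$ --- once via the containment $F_i\subseteq F_{n-j}$ when $i+j\le n$, and once via the injectivity of $F_i\cap E_j\to F_i/F_{n-j}$ when $i+j>n$ (the kernel computation $(F_i\cap E_j)\cap F_{n-j}=F_{n-j}\cap E_j=\{0\}$ is right because $F_{n-j}\subseteq F_i$ in that case). However, your route differs from the paper's: the paper proves the reverse implication by quotienting both flags by the line $E_1$ and inducting on $n$, whereas you give a direct, induction-free dimension count that establishes $\dim(F_i\cap E_j)=\max(0,i+j-n)$ for every pair at once. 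The induction is shorter to state but leaves real work implicit --- one must verify that the images of the $F_i$ in $\CC^n/E_1$ still form a complete flag (which itself uses the hypothesis, since $F_i\cap E_1\subseteq F_{n-1}\cap E_1=\{0\}$ for $i\le n-1$) and that the transversality hypothesis descends to the quotient flags. Your argument avoids those verifications entirely and is arguably the cleaner writeup, at the cost of the two-case analysis you already flagged.
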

  
  \begin{proof}[Proof Sketch.]
  	The forward direction is clear.  For the reverse implication, we can take the quotient of both flags by the one-dimensional subspace $E_1$ and induct on $n$.
  \end{proof}

  Define the \textbf{standard flag} $F_\bullet$ to be the flag in which $F_i=\langle e_1,\ldots, e_i\rangle $, and similarly define the \textbf{opposite flag} $E_\bullet$ by $E_i=\langle e_n,\ldots, e_{n-i+1} \rangle$.  It is easy to check that these flags $F_\bullet$ and $E_\bullet$ are transverse.  Furthermore, we shall see that every pair of transverse flags can be mapped to this pair, as follows.  Consider the action of $\GL_n(\CC)$ on $\CC^n$ by standard matrix multiplication, and note that this gives rise to an action on flags and subspaces, and subsequently on Schubert varieties as well.  
  
  \begin{lemma}\label{lem:transverse}
  	For any pair of transverse flags $F'_\bullet$ and $E'_\bullet$, there is an element $g\in \GL_n$ such that $gF'_\bullet=F_\bullet$ and $gE'_\bullet=E_\bullet$, where $F_\bullet$ and $E_\bullet$ are the standard and opposite flags. 
  \end{lemma}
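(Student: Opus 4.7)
The plan is to produce a basis $v_1, \ldots, v_n$ of $\CC^n$ that is simultaneously compatible with both flags: namely, $F'_i = \langle v_1, \ldots, v_i\rangle$ for every $i$, and $E'_i = \langle v_n, v_{n-1}, \ldots, v_{n-i+1}\rangle$ for every $i$. Once such a basis is in hand, the linear map $g \in \GL_n$ defined by $g(v_i) = e_i$ will carry $F'_\bullet$ to $F_\bullet$ and $E'_\bullet$ to $E_\bullet$ by construction, completing the proof.

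To build the basis, I would pick, for each $i = 1, \ldots, n$, a nonzero vector $v_i$ in the intersection $F'_i \cap E'_{n-i+1}$. By transversality applied to this pair of subspaces, $\dim(F'_i \cap E'_{n-i+1}) = i + (n-i+1) - n = 1$, so such a $v_i$ exists and is unique up to scalar. The next step is to verify that $v_1, \ldots, v_n$ is a basis. For this I would use transversality a second time: since $\dim(F'_{i-1} \cap E'_{n-i+1}) = (i-1) + (n-i+1) - n = 0$, the vector $v_i$ does not lie in $F'_{i-1}$. An easy induction then shows $v_1, \ldots, v_i$ are linearly independent (if $v_i$ were a combination of $v_1, \ldots, v_{i-1}$ it would lie in $F'_{i-1}$, contradiction), and since they all lie in $F'_i$ and $\dim F'_i = i$, they span $F'_i$. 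An entirely symmetric argument shows $v_n, \ldots, v_{n-i+1}$ span $E'_i$.

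The main (minor) obstacle is really just bookkeeping: one has to check that $F'_\bullet$ and $E'_\bullet$ satisfy enough pairwise transversality to apply the dimension count at each index. Here I would invoke Lemma \ref{lem:opposite}, which promotes the hypothesis ``transverse'' from the assumed condition $F'_{n-i} \cap E'_i = \{0\}$ to the full pairwise transversality of all $F'_i$ and $E'_j$, which is what the dimension count in the previous paragraph uses. No further ingredients are needed, and the construction of $g$ is immediate from the basis.
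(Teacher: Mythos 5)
Your proof is correct, and it is the standard argument for this fact; the paper itself leaves the proof as an exercise (Problem 1 of Section 4.7), so there is no authorial proof to compare against. The key steps all check out: transversality gives $\dim(F'_i\cap E'_{n-i+1})=1$ and $\dim(F'_{i-1}\cap E'_{n-i+1})=0$, so the chosen $v_i$ are linearly independent by your induction, and the map $g(v_i)=e_i$ then carries both flags to the standard and opposite flags as required (indeed, once $v_1,\ldots,v_n$ is known to be a basis, the spanning of each $E'_i$ by $v_n,\ldots,v_{n-i+1}$ already follows by dimension count, so the ``symmetric argument'' is not even needed).
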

  
  The proof of this lemma is left as an exercise to the reader (see the Problems section below).  The important corollary is that to understand the intersection of the Schubert varieties $\Omega_{\lambda}(F'_\bullet)$ and $\Omega_{\mu}(E'_\bullet)$, it suffices to compute the intersection $\Omega_\lambda(F_\bullet)\cap \Omega_{\lambda}(E_\bullet)$ and multiply the results by the appropriate matrix $g$.
  
  So, when we consider the intersection of two Schubert varieties with respect to transverse flags, it suffices to consider the standard and opposite flags $F_\bullet$ and $E_\bullet$.  We use this principle in the \textbf{duality theorem} below, which tells us exactly when the intersection of $\Omega_\lambda(F_\bullet)$ and $\Omega_{\mu}(E_\bullet)$ is nonempty. 
  
  \subsection{Duality theorem}
  
  \begin{definition}
  	Two partitions $\lambda=(\lambda_1,\ldots,\lambda_k)$ and $\mu=(\mu_1,\ldots,\mu_k)$ are \textbf{complementary} in the $k\times (n-k)$ ambient rectangle if and only if $\lambda_i+\mu_{k+1-i}=n-k$ for all $i$.  In this case we write $\mu^c=\lambda$.
  \end{definition}
  
  In other words, if we rotate the Young diagram of $\mu$ and place it in the lower right corner of the ambient rectangle, its complement is $\lambda$.   Below, we see that $\mu=(3,2)$ is the complement of $\lambda=(4,2,1)$ in $\Gr(7,3)$.
  
  \begin{center}
  	\includegraphics{ImportantBox-eps-converted-to.pdf}
  \end{center}
  
  \begin{theorem}[Duality Theorem]
  	Let $F_\bullet$ and $E_\bullet$ be transverse flags in $\CC^n$, and let $\lambda$ and $\mu$ be partitions with $|\lambda|+|\mu|=k(n-k)$.  In $\Gr(n,k)$, the intersection $\Omega_\lambda(F_\bullet)\cap \Omega_{\mu}(E_\bullet)$ has $1$ element if $\mu$ and $\lambda$ are complementary partitions, and is empty otherwise.  Furthermore, if $\mu$ and $\lambda$ are any partitions with $\mu_{k+1-i}+\lambda_i>n-k$ for some $i$ then $\Omega_\lambda(F_\bullet)\cap \Omega_{\mu}(E_\bullet)=\emptyset$.
  \end{theorem}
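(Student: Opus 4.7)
The plan is to analyze the intersection pointwise by combining the dimension conditions coming from both flags. Since the flags are transverse, I would first use Lemma \ref{lem:transverse} to reduce to the case of the standard flag $F_\bullet$ and the opposite flag $E_\bullet$, so that $F_a \cap E_b$ is spanned by $\{e_i : i \le a \text{ and } i \ge n-b+1\}$; this intersection has dimension $\max(0, a+b-n)$.

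Suppose $V \in \Omega_\lambda(F_\bullet) \cap \Omega_\mu(E_\bullet)$. Set $a_i = n-k+i-\lambda_i$ and $b_j = n-k+j-\mu_j$. By definition $\dim(V \cap F_{a_i}) \ge i$ and $\dim(V \cap E_{b_j}) \ge j$. For any fixed $i$, take $j = k+1-i$ and let $W_1 = V \cap F_{a_i}$, $W_2 = V \cap E_{b_{k+1-i}}$. Since $W_1, W_2 \subseteq V$ and $\dim V = k$, we have $\dim(W_1 \cap W_2) \ge i + (k+1-i) - k = 1$. But $W_1 \cap W_2 \subseteq F_{a_i} \cap E_{b_{k+1-i}}$, so by the formula above we need
\[
a_i + b_{k+1-i} \;=\; 2n-k+1 - \lambda_i - \mu_{k+1-i} \;>\; n,
\]
equivalently $\lambda_i + \mu_{k+1-i} \le n-k$ for every $i$. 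This already proves the last sentence of the theorem: if the reverse strict inequality holds for some $i$, the intersection is empty.

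Now assume $|\lambda|+|\mu| = k(n-k)$. Summing the inequality $\lambda_i + \mu_{k+1-i} \le n-k$ over $i=1,\dots,k$ gives $|\lambda|+|\mu| \le k(n-k)$, and equality forces $\lambda_i + \mu_{k+1-i} = n-k$ for every $i$, which is precisely the complementarity condition. If this fails, the intersection is empty. If $\mu = \lambda^c$, then for each $i$ we get $a_i + b_{k+1-i} = n+1$, so $F_{a_i} \cap E_{b_{k+1-i}}$ is exactly the one-dimensional line $\langle e_{a_i}\rangle$. The inequality above was an equality, so $W_1\cap W_2$ is forced to equal this line; thus $e_{a_i} \in V$ for every $i$.

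Finally, since $\lambda$ is weakly decreasing the indices $a_i = n-k+i-\lambda_i$ are strictly increasing in $i$, so $\{e_{a_1},\ldots,e_{a_k}\}$ are $k$ distinct standard basis vectors; hence $V = \langle e_{a_1},\ldots,e_{a_k}\rangle$ is uniquely determined. A direct check, using that $j - \lambda_j \le i - \lambda_i$ iff $j \le i$, shows that this $V$ does lie in $\Omega_\lambda(F_\bullet)$, and symmetrically in $\Omega_\mu(E_\bullet)$. I expect the main bookkeeping obstacle to be keeping the reindexing $j = k+1-i$ and the relation $a_i + b_{k+1-i}$ straight; once that is done, transversality does the rest.
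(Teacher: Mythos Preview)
Your argument is correct and follows essentially the same route as the paper's proof: reduce to the standard and opposite flags, pair the Schubert condition for index $i$ on $F_\bullet$ with the one for index $k+1-i$ on $E_\bullet$, and read off both the emptiness criterion and, in the complementary case, the forced basis vectors $e_{a_i}\in V$. The only cosmetic difference is that the paper phrases the emptiness step as ``the two ambient pieces are disjoint, so $\dim V\ge k+1$,'' whereas you phrase it as ``$\dim(W_1\cap W_2)\ge 1$ inside a piece of dimension $\max(0,a_i+b_{k+1-i}-n)$''; these are equivalent, and your version has the minor advantage of handling the empty and one-point cases uniformly and of explicitly verifying that the unique candidate $V$ actually lies in both Schubert varieties.
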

  
  We can use a reversed form of row reduction to express the Schubert varieties with respect to the opposite flag, and then the Schubert cells for the complementary partitions will have their $1$'s in the same positions, as in the example below.  Their unique intersection will be precisely this matrix of $1$'s with $0$'s elsewhere.
  
   $$\left[\begin{array}{ccccccc}
   0 & 0 & 0 & 0 & 0 & 0 & 1 \\
   0 & 0 & 0 & 1 & \ast & \ast & 0 \\
   0 & 1 & \ast & 0 & \ast & \ast & 0 \\
   \end{array}\right]  \hspace{2cm}  
   \left[\begin{array}{ccccccc}
   \ast & 0 & \ast & 0 & \ast & \ast & 1 \\
   \ast & 0 & \ast & 1 & 0    & 0    & 0 \\
   \ast & 1 & 0    & 0 & 0    & 0    & 0 \\
   \end{array}\right]$$
  
  We now give a more rigorous proof below, which follows that in \cite{Fulton} but with a few notational differences.
  
  \begin{proof}
    We prove the second claim first: if for some $i$ we have $\mu_{k+1-i}+\lambda_i>n-k$ then $\Omega_{\lambda}(F_\bullet)\cap \Omega_\mu(E_\bullet)$ is empty.  Assume for contradiction that there is a subspace $V$ in the intersection.  We know $\dim(V)=k$, and also 
    \begin{equation}\label{eqn1}
    \dim(V\cap \langle e_1,e_2,\ldots, e_{n-k+i-\lambda_i} \rangle)\ge i,
    \end{equation}
    $$\dim(V\cap \langle e_{n},e_{n-1},\ldots, e_{n+1-(n-k+(k+1-i)-\mu_{k+1-i})} \rangle)\ge k+1-i.$$
    Simplifying the last subscript above, and reversing the order of the generators, we get
    \begin{equation}\label{eqn2}
    \dim(V\cap \langle e_{i+\mu_{k+1-i}},\ldots, e_{n-1},e_n\rangle)\ge k+1-i.
    \end{equation}
    
    Notice that $i+\mu_{k+1-i}>n-k+i-\lambda_i$ by the condition $\mu_{k+1-i}+\lambda_i>n-k$, so the two subspaces we are intersecting with $V$ in equations (\ref{eqn1}) and (\ref{eqn2}) are disjoint.  It follows that $V$ has dimension at least $k+1-i+i=k+1$, a contradiction.  Thus $\Omega_{\lambda}(F_\bullet)\cap \Omega_\mu(E_\bullet)$ is empty in this case.
    
    Thus, if $|\lambda|+|\mu|=k(n-k)$ and $\lambda$ and $\mu$ are not complementary, then the intersection is empty as well, since the inequality $\mu_{k+1-i}+\lambda_i>n-k$ must hold for some $i$.
    
    Finally, suppose $\lambda$ and $\mu$ are complementary.  Then equations (\ref{eqn1}) and (\ref{eqn2}) still hold, but now $n-k+i-\lambda_i=i+\mu_{n+1-i}$ for all $i$.  Thus $\dim(V\cap\langle e_{i+\mu_{n+1-i}}\rangle)=1$ for all $i=1,\ldots,k$, and since $V$ is $k$-dimensional it must equal the span of these basis elements, namely $$V=\langle e_{1+\mu_n},e_{2+\mu_{n-1}},\ldots e_{k+\mu_{n+1-k}} \rangle.$$ This is the unique solution.
  \end{proof}

  \begin{example}
  	We now can give a rather high-powered proof that there is a unique line passing through any two distinct points in $\PP^n$.  As before, we work in one higher dimensional affine space and consider $2$-planes in $\CC^{n+1}$. Working in $\Gr(n+1,2)$, the two distinct points become two distinct one-dimensional subspaces $F_1$ and $E_1$ of $\CC^{n+1}$, and the Schubert condition that demands the $2$-dimensional subspace $V$ contains them is $$\dim(V\cap F_1)\ge 1,\hspace{1cm} \dim(V\cap E_1)\ge 1.$$ These are the Schubert conditions for a single-part partition $\lambda=(\lambda_1)$ where $(n+1)-2+1-\lambda_1=1$.  Thus $\lambda_1=n-1$, and we are intersecting the Schubert varieties $$\Omega_{(n-1)}(F_\bullet)\cap \Omega_{(n-1)}(E_\bullet)$$ where $F_\bullet$ and $E_\bullet$ are any two transverse flags extending $F_1$ and $E_1$.  Notice that $(n-1)$ and $(n-1)$ are complementary partitions in the $2\times (n-1)$ ambient rectangle (see Figure \ref{fig:two-row}), so by the Duality Theorem there is a unique point of $\Gr(n+1,2)$ in the intersection.  The conclusion follows.
    
    \begin{figure}[b]
    	\begin{center}
    		\includegraphics{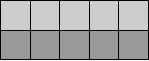}
    	\end{center}
    	
    	\caption{\label{fig:two-row} Two complimentary partitions of size $n-1$ filling the $n-1\times 2$ rectangle.}
    \end{figure}
    
  \end{example}

\subsection{Cell complex structure}

In order to prove the more general zero-dimensional Littlewood-Richardson rule and compute the Littlewood-Richardson coefficients, we need to develop more heavy machinery.  In particular, we need to understand the Grassmannian as a geometric object and compute its \textbf{cohomology}, an associated ring in which multiplication of certain generators will correspond to intersection of Schubert varieties.  (See \cite{Hatcher} for more details on all of the material in this section.)

The term \textit{Schubert cell} comes from the notion of a \textbf{cell complex} (also known as a CW complex) in algebraic topology.    An \textbf{$n$-cell} is a topological space homeomorphic to the open ball $|v|<1$ in $\RR^n$, and its associated \textbf{$n$-disk} is its closure $|v|\le 1$ in $\RR^n$.

 To construct a cell complex, one starts with a set of points called the \textbf{$0$-skeleton} $X^0$, and then attaches $1$-disks $D$ via continuous boundary maps from the boundary $\partial D$ (which consists of two points) to $X^0$.  The result is a \textbf{$1$-skeleton} $X^1$. 
 
 This can then be extended to a $2$-skeleton by attaching $2$-disks $D$ via maps from the boundary $\partial D$ (which is a circle) to $X^1$.  In general the $n$-skeleton $X^n$ is formed by attaching a set of $n$-disks to $X^{n-1}$ along their boundaries.

  More precisely, to form $X^n$ from $X^{n-1}$, we start with a collection of $n$-disks $D^n_\alpha$ and continuous attaching maps $\varphi_\alpha:\partial D_\alpha^n\to X^{n-1}$.  Then $$X^n=\frac{X^{n-1}\sqcup \bigsqcup_\alpha D_\alpha^n}{\sim}$$ where $\sim$ is the identification $x\sim \varphi_\alpha(x)$ for $x\in \partial D^n_\alpha$.  The cell complex is $X=\bigcup_n X^n$, which may be simply $X=X^n$ if the process stops at stage $n$.  By the construction, the points of $X^0$ along with the open $i$-cells associated with the $i$-disks in $X^i$ for each $i$ are disjoint and cover the cell complex $X$.  The topology is given by the rule that $A\subset X$ is open if and only if $A\cap X^n$ is open in $X^n$ for all $n$, where the topology on $X^n$ is given by the usual Euclidean topology on $\mathbb{R}^n$.

\begin{example}\label{realproj}
   The real projective plane $\PP^2_{\mathbb{R}}$ has a cell complex structure in which $X^0=\{(0:0:1)\}$ is a single point, $X^1=X^0\sqcup \{(0:1:\ast)\}$ is topologically a circle formed by attaching a $1$-cell to the point at both ends, and then $X^2$ is formed by attaching a $2$-cell $\mathbb{R}^2$ to the circle such that the boundary wraps around the $1$-cell twice.  This is because the points of the form $(1:xt:yt)$ as $t\to \infty$ and as $t\to -\infty$ both approach the same point in $X^1$, so the boundary map must be a $2$-to-$1$ mapping.
\end{example}

\begin{example}
	The \textit{complex} projective plane $\PP^2_{\CC}$ has a simpler cell complex structure, consisting of starting with a single point $X^0=\{(0:0:1)\}$, and then attaching a $2$-cell (a copy of $\CC=\RR^2$) like a balloon to form $X^2$.  A copy of $\CC^2=\RR^4$ is then attached to form $X^4$.
\end{example}

The Schubert cells give a cell complex structure on the Grassmannian.  For a complete proof of this, see \cite{Tuomas}, section 3.2.  We sketch the construction below.

 Define the $0$-skeleton $X^0$ to be the $0$-dimensional Schubert variety $\Omega_{((n-k)^k)}$.  Define $X^2$ to be $X^0$ along with the $2$-cell (since we are working over $\CC$ and not $\RR$) given by $\Omega_{((n-k)^{k-1},n-k-1)}^\circ$, and the attaching map given by the closure in $\Gr(n,k)$.  Note that the partition in this step is formed by removing a single corner square from the ambient rectangle.  

Then, $X^4$ is formed by attaching the two four-cells given by removing two outer corner squares in both possible ways, giving either $\Omega_{((n-k)^{k-2},n-k-1,n-k-1)}^\circ$ or $\Omega_{((n-k)^{k-1},n-k-2)}^\circ$.  We can continue in this manner with each partition size to define the entire cell structure, $X^0\subset X^2\subset\cdots \subset X^{2k(n-k)}$. 

\begin{example}
	We have $$\Gr(4,2)=\Omega^\circ_{\tiny\yng(2,2)}\sqcup\Omega^\circ_{\tiny\yng(2,1)}\sqcup\Omega^\circ_{\tiny\yng(2)}\sqcup\Omega^\circ_{\tiny\yng(1,1)}\sqcup\Omega^\circ_{\tiny\yng(1)}\sqcup\Omega^\circ_{\emptyset},$$
	forming a cell complex structure in which $X^0=\Omega^\circ_{\tiny\yng(2,2)}$, $X^2$ is formed by attaching $\Omega^\circ_{\tiny\yng(2,1)}$, $X^4$ is formed by attaching $\Omega^\circ_{\tiny\yng(2)}\sqcup\Omega^\circ_{\tiny\yng(1,1)}$, $X^6$ is formed by attaching $\Omega^\circ_{\tiny\yng(1)}$, and  $X^8$ is formed by attaching $\Omega^\circ_{\emptyset}$.
\end{example}

\subsection{Cellular homology and cohomology}\label{sec:cohomology}

   For a CW complex $X=X^0\subset \cdots \subset X^n$, define $$C_k=\mathbb{Z}^{\#k\text{-cells}},$$
   the free abelian group generated by the $k$-cells $B_\alpha^{(k)}=(D_\alpha^{(k)})^\circ$.
   
   Define the \textbf{cellular boundary map} $d_{k+1}:C_{k+1}\to C_k$ by $$d_{k+1}(B_{\alpha}^{(k+1)})=\sum_{\beta} \mathrm{deg}_{\alpha\beta}\cdot B_{\beta}^{(k)},$$ where $\mathrm{deg}_{\alpha\beta}$ is the \textit{degree} of the composite map $$\partial \overline{B_\alpha^{(k+1)}}\to X^k\to \overline{B_\beta^{(k)}}.$$  The first map above is the cellular attaching map from the boundary of the closure of the ball $B_\alpha^{(k+1)}$ to the $k$-skeleton, and the second map is the quotient map formed by collapsing $X^k\setminus B_\beta^{(k)}$ to a point.  The composite is a map from a $k$-sphere to another $k$-sphere, which has a \textbf{degree}, whose precise definition we omit here and refer the reader to \cite{Hatcher}, section 2.2, p.\ 134.  As one example, the $2$-to-$1$ attaching map described in Example \ref{realproj} for $\PP^2_\mathbb{R}$ has degree $2$.
   
   It is known that the cellular boundary maps make the groups $C_k$ into a \textbf{chain complex}: a sequence of maps
   $$0\to C_n\xrightarrow{d_n}C_{n-1}\xrightarrow{d_{n-1}}C_{n-2}\to \cdots \to C_1 \xrightarrow{d_1}C_0\to 0$$ for which $d_i\circ d_{i+1}=0$ for all $i$.  This latter condition implies that the image of the map $d_{i+1}$ is contained in the kernel of $d_i$ for all $i$, and so we can consider the quotient groups $$H_i(X)=\ker(d_i)/\mathrm{im}(d_{i+1})$$ for all $i$.  These quotients are abelian groups called the \textbf{cellular homology groups} of the space $X$.
   
   \begin{example}
   	Recall that $\PP^2_{\CC}$ consists of a point, a $2$-cell, and a $4$-cell.  So, its cellular chain complex is:
   	$$\cdots \to 0\to 0 \to 0 \to \ZZ\to 0 \to \ZZ \to 0 \to \ZZ\to 0$$
   	and the homology groups are $H_0=H_2=H_4=\ZZ$, $H_1=H_3=0$.
   	
   	On the other hand, in $\PP^2_{\RR}$, the chain complex looks like:
   	$$0\to \ZZ\to \ZZ \to \ZZ\to 0$$ where the first map $\ZZ\to \ZZ$ is multiplication by $2$ and the second is the $0$ map, due to the degrees of the attaching maps.  It follows that $H_2=0$, $H_1=\mathbb{Z}/2\ZZ$, and $H_0=\ZZ$.
   \end{example}

   We can now define the \textbf{cellular cohomology} by dualizing the chain complex above.  In particular, define $$C^k=\mathrm{Hom}(C_k,\mathbb{Z})=\{\text{group homomorphisms } f:C_k\to \mathbb{Z}\}$$ for each $k$, and define the \textbf{coboundary maps} $d_k^\ast:C^{k-1}\to C^k$ by $$d_k^\ast f(c)=f(d_k(c))$$ for any $f\in C^k$ and $c\in C_k$.  Then the coboundary maps form a \textbf{cochain complex}, and we can define the cohomology groups to be the abelian groups $$H^i(X)=\ker(d_{i+1}^\ast)/\mathrm{im}(d_i^\ast)$$ for all $i$.
   
   \begin{example}
   	The cellular cochain complex for $\PP^2_{\CC}$ is $$0\to \ZZ\to 0 \to \ZZ \to 0 \to \ZZ \to 0 \to 0 \to 0 \to \cdots$$
   	and so the cohomology groups are $H^0=H^2=H^4=\ZZ$, $H^1=H^3=0$.
   \end{example}
   
   Finally, the direct sum of the cohomology groups $$H^\ast(X)=\bigoplus H^i(X)$$ has a ring structure given by the \textbf{cup product} (\cite{Hatcher}, p.\ 249), which is the dual of the ``cap product'' (\cite{Hatcher}, p.\ 239) on homology and roughly corresponds to taking intersection of cohomology classes in this setting.
   
   In particular, there is an equivalent definition of cohomology on the Grassmannian known as the \textit{Chow ring}, in which cohomology classes in $H^\ast(X)$ are equivalence classes of algebraic subvarieties under \textbf{birational equivalence}. (See \cite{FultonIntersectionTheory}, Sections 1.1 and 19.1.) In other words, deformations under rational families are still equivalent:  in $\PP^2$, for instance, the family of algebraic subvarieties of the form $xy-tz^2=0$ as $t\in \CC$ varies are all in one equivalence class, even as $t\to 0$ and the hyperbola degenerates into two lines. 
   
   The main fact we will be using under this interpretation is the following, which we state without proof.  (See \cite{Fulton}, Section 9.4 for more details.)
   
   \begin{theorem}
   	The cohomology ring $H^\ast(\Gr(n,k))$ has a $\ZZ$-basis given by the classes $$\sigma_\lambda:=[\Omega_\lambda(F_\bullet)]\in H^{2|\lambda|}(\Gr(n,k))$$ for $\lambda$ a partition fitting inside the ambient rectangle.  The cohomology $H^\ast(\Gr(n,k))$ is a graded ring, so $\sigma_\lambda\cdot \sigma_\mu\in H^{2|\lambda|+2|\mu|}(\Gr(n,k))$, and we have $$\sigma_\lambda\cdot \sigma_\mu=[\Omega_\lambda(F_\bullet)\cap \Omega_\mu(E_\bullet)]$$ where $F_\bullet$ and $E_\bullet$ are the standard and opposite flags.
   \end{theorem}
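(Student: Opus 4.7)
The plan is to build the cohomology basis from the cell complex structure established earlier, and then identify the cup product with transverse intersection via Poincar\'e duality (or equivalently the Chow ring picture).

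First, I would exploit the fact that every Schubert cell $\Omega_\lambda^\circ$ has real dimension $2(k(n-k)-|\lambda|)$, so the CW structure on $\Gr(n,k)$ has cells only in even (real) dimensions. This is the crucial rigidity: in the cellular chain complex $C_\bullet$, every group $C_{2i+1}$ is zero, so each cellular boundary map $d_i\colon C_i\to C_{i-1}$ is automatically the zero map. Therefore $H_{2i}(\Gr(n,k)) = C_{2i}$ is the free abelian group on the Schubert cells of complex codimension $|\lambda|=k(n-k)-i$, and all odd homology vanishes. Applying the universal coefficient theorem (with trivial Ext terms because everything is free), the cohomology groups $H^{2|\lambda|}(\Gr(n,k))$ are themselves free abelian with a distinguished $\ZZ$-basis dual to the homology basis. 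I would then identify this basis element in $H^{2|\lambda|}$ with the class $\sigma_\lambda=[\Omega_\lambda(F_\bullet)]$, using the fact that $\Omega_\lambda(F_\bullet)=\overline{\Omega_\lambda^\circ(F_\bullet)}$ represents the fundamental class of its cell.

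Next, for the product formula, I would invoke Poincar\'e duality for the smooth compact complex manifold $\Gr(n,k)$: cup product in $H^\ast$ corresponds to intersection product of the Poincar\'e-dual homology classes, which in turn is computed by transverse intersection of representing cycles. Since $F_\bullet$ and $E_\bullet$ are transverse flags, I would argue (following the spirit of the Duality Theorem already proved) that the Schubert varieties $\Omega_\lambda(F_\bullet)$ and $\Omega_\mu(E_\bullet)$ meet generically transversely along $\Omega_\lambda(F_\bullet)\cap\Omega_\mu(E_\bullet)$, so that the cup product $\sigma_\lambda\cdot\sigma_\mu$ is represented by this intersection as a subvariety. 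The grading statement $\sigma_\lambda\cdot\sigma_\mu\in H^{2|\lambda|+2|\mu|}$ is automatic from the graded ring structure and the codimension count.

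The main obstacle is the transversality/representability step: one must know that a single Schubert variety $\Omega_\lambda(F_\bullet)$ already represents $\sigma_\lambda$ as an algebraic cycle (not just that its fundamental class equals the dual basis element), and that transverse flags produce generically transverse intersections of Schubert varieties. The cleanest way around this is to work in the Chow ring $A^\ast(\Gr(n,k))$ mentioned in the excerpt: there $[\Omega_\lambda(F_\bullet)]$ is independent of the flag by rational equivalence (the $\GL_n$-action moves any flag to any other through a connected family), and the intersection product is \emph{defined} by proper/transverse intersection of representing subvarieties. Combined with the known isomorphism $A^\ast(\Gr(n,k))\isom H^{2\ast}(\Gr(n,k))$ for Grassmannians (which follows from the cell decomposition, since the cycle class map is an isomorphism for varieties with an affine paving), this gives both claims simultaneously and dispatches the transversality issue. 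I would then close by remarking that all remaining details — nonemptiness of the Schubert basis, flag-independence of $\sigma_\lambda$, and the compatibility of cup and intersection products — are standard once the affine paving by Schubert cells is in hand.
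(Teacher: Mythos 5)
Your outline is essentially the standard argument, but note that the paper does not prove this theorem at all: it is explicitly ``stated without proof'' with a citation to Fulton, Section 9.4, so there is no internal proof to compare against. Your sketch is correct and is the argument one would find in the references: the Schubert cells occur only in even real dimensions, so every cellular boundary map vanishes, $H_{2i}$ is free abelian on the cells of complex dimension $i$, the universal coefficient theorem gives freeness of cohomology, and the class of the closure $\Omega_\lambda=\overline{\Omega_\lambda^\circ}$ is the cellular cycle corresponding to its open cell, whose Poincar\'e dual lands in $H^{2|\lambda|}$ as required. You have also correctly isolated the genuinely nontrivial step, which is the product formula. One caution there: passing to the Chow ring and invoking flag-independence by rational equivalence does not by itself yield $\sigma_\lambda\cdot\sigma_\mu=[\Omega_\lambda(F_\bullet)\cap\Omega_\mu(E_\bullet)]$; the intersection product is only computed by the set-theoretic intersection when that intersection is proper (of the expected codimension $|\lambda|+|\mu|$) and generically transverse, i.e.\ when the scheme-theoretic intersection is generically reduced. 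For the standard and opposite flags this is true --- the intersection is a Richardson variety --- but it requires either Kleiman's transversality theorem for the transitive $\GL_n$-action or a direct coordinate analysis in the spirit of the paper's Duality Theorem proof. So your proposal is a sound skeleton, with that one input still to be supplied from the literature rather than from anything established earlier in the paper.
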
 
   
   Note that $\sigma_\lambda$ is independent of the choice of flag $F_\bullet$, since any two Schubert varieties of the same partition shape are rationally equivalent via a change of basis.  
   
   We can now restate the intersection problems in terms of multiplying Schubert classes.  In particular, if $\lambda^{(1)},\ldots,\lambda^{(r)}$ are partitions with $\sum_i |\lambda^{(i)}|=k(n-k)$, then $$\sigma_{\lambda^{(1)}}\cdots \sigma_{\lambda^{(r)}}\in H^{k(n-k)}(\Gr(n,k))$$ and there is only one generator of the top cohomology group, namely $\sigma_B$ where $B$ is the ambient rectangle.  This is the cohomology class of the single point $\Omega_B(F_\bullet)$ for some flag $F_\bullet$.  Thus the intersection of the Schubert varieties $\Omega_{\lambda^{(1)}}(F^{(1)}_\bullet)\cap \cdots \cap \Omega_{\lambda^{(r)}}(F^{(r)}_\bullet)$ is rationally equivalent to a finite union of points, the number of which is the coefficient $c^{B}_{\lambda^{(1)},\ldots,\lambda^{(r)}}$ in the expansion $$\sigma_{\lambda^{(1)}}\cdots \sigma_{\lambda^{(r)}}=c^{B}_{\lambda^{(1)},\ldots,\lambda^{(r)}}\sigma_B.$$  For a sufficiently general choice of flags, the $c^{B}_{\lambda^{(1)},\ldots,\lambda^{(r)}}$ points in the intersection are distinct with no multiplicity.
   
   In general, we wish to understand the coefficients that we get upon multiplying Schubert classes and expressing the product back in the basis $\{\sigma_\lambda\}$ of Schubert classes.
   
   \begin{example}\label{ExampleLines}
   	In Problem \ref{previous}, we saw that Question \ref{lines} can be rephrased as computing the size of the intersection $$\Omega_{\square}(F^{(1)}_\bullet)\cap \Omega_{\square}(F^{(2)}_\bullet)\cap \Omega_{\square}(F^{(3)}_\bullet)\cap \Omega_{\square}(F^{(4)}_\bullet)$$ for a given generic choice of flags $F^{(1)}_\bullet,\ldots, F^{(4)}_\bullet$.  By the above analysis, we can further reduce this problem to computing the coefficient $c$ for which  $$\sigma_{\square}\cdot \sigma_\square\cdot \sigma_\square\cdot \sigma_\square=c \cdot \sigma_{\scalebox{0.3}{\yng(2,2)}}$$ in $H^\ast(\Gr(4,2))$. 
   \end{example}
   
\subsection{Connection with symmetric functions}
 
 We can model the cohomology ring $H^\ast(\Gr(n,k))$ algebraically as a quotient of the ring of \textbf{symmetric functions}.  We only cover the essentials of symmetric function theory for our purposes here, and refer the reader to Chapter 7 of \cite{Stanley}, or the books \cite{Macdonald} or \cite{Sagan} for more details, or to \cite{Fulton} for the connection between $H^\ast(\Gr(n,k))$ and the ring of symmetric functions.
 
 \begin{definition}
 	The ring of \textit{symmetric functions} $\Lambda_{\mathbb{C}}(x_1,x_2,\ldots)$ is the ring of bounded-degree formal power series $f\in \mathbb{C}[[x_1,x_2,\ldots]]$ which are symmetric under permuting the variables, that is, $$f(x_1,x_2,\ldots)=f(x_{\pi(1)},x_{\pi(1)},\ldots)$$ for any permutation $\pi:\mathbb{Z}_+\to\mathbb{Z}_+$ and $\mathrm{deg}(f)<\infty$. 
 \end{definition}
 
 For instance, $x_1^2+x_2^2+x_3^2+\cdots$ is a symmetric function of degree $2$.
 
 The most important symmetric functions for Schubert calculus are the \textit{Schur functions}.  They can be defined in many equivalent ways, from being characters of irreducible representations of $S_n$ to an expression as a ratio of determinants.  We use the combinatorial definition here, and start by introducing some common terminology involving Young tableaux and partitions.
 
 \begin{definition}
 	A \textbf{skew shape} is the difference $\nu/\lambda$ formed by cutting out the Young diagram of a partition $\lambda$ from the strictly larger partition $\nu$.  A skew shape is a \textbf{horizontal strip} if no column contains more than one box.  
 \end{definition}
 
 \begin{definition}
 	A \textbf{semistandard Young tableau (SSYT)} of shape $\nu/\lambda$ is a way of filling the boxes of the Young diagram of $\nu/\lambda$ with positive integers so that the numbers are weakly increasing across rows and strictly increasing down columns.  An SSYT has \textbf{content} $\mu$ if there are $\mu_i$ boxes labeled $i$ for each $i$.  The \textbf{reading word} of the tableau is the word formed by concatenating the rows from bottom to top.
 \end{definition}  
 
 The following is a semistandard Young tableau of shape $\nu/\lambda$ and content $\mu$ where $\nu=(6,5,3)$, $\lambda=(3,2)$, and $\mu=(4,2,2,1)$.  Its reading word is $134223111$.
 
 \begin{center}
 	\includegraphics{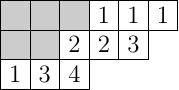}
 \end{center}
 
 \begin{definition}
 	Let $\lambda$ be a partition.  Given a semistandard Young tableau $T$ of shape $\lambda$, define $x^T=x_1^{m_1}x_2^{m_2}\cdots$ where $m_i$ is the number of $i$'s in $T$.  The \textbf{Schur function} for a partition $\lambda$ is the symmetric function defined by $$s_\lambda=\sum_{T} x^T$$ where the sum ranges over all SSYT's $T$ of shape $\lambda$.
 \end{definition}
 
 \begin{example}
 	For $\lambda=(2,1)$, the tableaux 
 	$$\young(11,2)\hspace{0.5cm} \young(12,2) \hspace{0.5cm} \young(11,3)\hspace{0.5cm} \young(12,3)\hspace{0.5cm} \young(13,2)\hspace{0.2cm}\cdots $$
 	are a few of the infinitely many SSYT's of shape $\lambda$.  Thus we have $$s_{\lambda}=x_1^2x_2+x_1x_2^2+x_1^2x_3+2x_1x_2x_3+\cdots.$$
 \end{example}
 
 It is well-known that the Schur functions $s_\lambda$ are symmetric and form a vector space basis of $\Lambda(x_1,x_2,\ldots)$ as $\lambda$ ranges over all partitions.  The key fact that we will need is that they allow us to understand the cohomology ring $H^\ast(\Gr(n,k))$, as follows.
 
 \begin{theorem}\label{thm:isom}
 	There is a ring isomorphism $$H^\ast(\Gr(n,k))\isom \Lambda(x_1,x_2,\ldots)/(s_\lambda| \lambda\not\subset B)$$ where $B$ is the ambient rectangle and $(s_\lambda|\lambda\not\subset B)$ is the ideal generated by the Schur functions.  The isomorphism sends the Schubert class $\sigma_\lambda$ to the Schur function $s_\lambda$.
 \end{theorem}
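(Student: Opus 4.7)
The plan is to verify that both sides are free abelian groups with bases naturally indexed by partitions $\lambda \subset B$, then define a candidate linear map on these bases and show it respects multiplication by invoking Pieri's rule on both sides.

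First I would set up the bases. On the cohomology side, the preceding theorem already provides the $\ZZ$-basis $\{\sigma_\lambda : \lambda \subset B\}$. On the symmetric function side, the Schur functions $\{s_\lambda\}$ form a $\ZZ$-basis of $\Lambda$ as $\lambda$ ranges over all partitions, so I need to check that the ideal $I = (s_\lambda : \lambda \not\subset B)$ is spanned as an abelian group precisely by $\{s_\mu : \mu \not\subset B\}$; this follows from the classical Littlewood--Richardson rule, which gives $s_\lambda s_\nu = \sum c^\mu_{\lambda\nu} s_\mu$ with each $\mu \supset \lambda$, so $\mu \not\subset B$ whenever $\lambda \not\subset B$. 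Consequently $\Lambda/I$ has basis $\{s_\lambda : \lambda \subset B\}$, and the prescription $\phi(\sigma_\lambda) = s_\lambda$ defines a graded $\ZZ$-linear bijection (with the convention $\deg s_\lambda = |\lambda|$, matching $\tfrac{1}{2}\deg \sigma_\lambda$).

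The substantive step is to show $\phi$ is a ring homomorphism. It suffices to verify this on a generating set, and the natural choice is the special Schubert classes $\sigma_{(r)}$ for $1 \le r \le n-k$, whose images $s_{(r)} = h_r$ are the complete homogeneous symmetric functions that generate $\Lambda = \ZZ[h_1, h_2, \ldots]$. The Pieri rule for symmetric functions states
$$s_\lambda \cdot s_{(r)} = \sum_{\mu} s_\mu,$$
summed over $\mu$ obtained from $\lambda$ by adding a horizontal strip of size $r$. The geometric Pieri rule asserts the corresponding identity in $H^\ast(\Gr(n,k))$, namely
$$\sigma_\lambda \cdot \sigma_{(r)} = \sum_{\mu \subset B} \sigma_\mu,$$
with the same horizontal-strip indexing, restricted to $\mu \subset B$; this restriction matches exactly the effect of the quotient by $I$. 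Granting Pieri, one expresses each $s_\lambda$ as a polynomial in the $h_r$ via the Jacobi--Trudi determinantal identity, and by induction on $|\lambda|$ one shows that $\sigma_\lambda$ is given by the same polynomial in the $\sigma_{(r)}$; thus $\phi$ is determined by its values on generators and is a ring homomorphism.

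The main obstacle is proving the geometric Pieri rule for Schubert classes, which is the essential input carrying the geometry. Concretely, one must compute the cycle class of $\Omega_\lambda(F_\bullet) \cap \Omega_{(r)}(E_\bullet)$ for transverse flags and identify it with the horizontal-strip sum. The standard approach, which I would follow from Fulton, degenerates one flag through a rational family so that the intersection decomposes generically as a reduced union of Schubert varieties $\Omega_\mu(F_\bullet)$ with $\mu/\lambda$ a horizontal strip of size $r$; transversality and a dimension count then verify that each such $\mu$ appears with multiplicity exactly $1$. Once this geometric identity is in hand, the remainder of the proof — checking that $\phi$ is a graded ring isomorphism with basis $\{s_\lambda : \lambda \subset B\}$ — is essentially bookkeeping.
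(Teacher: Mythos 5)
Your proposal follows essentially the same route as the paper: observe that the map $\sigma_\lambda\mapsto s_\lambda$ is immediately an isomorphism of graded free $\ZZ$-modules, reduce the ring-homomorphism claim to the Pieri rule since the classes $\sigma_{(r)}$ and the $h_r=s_{(r)}$ generate their respective rings, and defer the hard geometric content of the Pieri rule to Fulton. The only divergence is in how that geometric Pieri rule is sketched — the paper dualizes it (via the Duality Theorem) to a zero-dimensional triple-intersection count $\Omega_\lambda(F_\bullet)\cap\Omega_\mu(E_\bullet)\cap\Omega_{(r)}(H_\bullet)$ and does a direct matrix analysis, whereas you propose degenerating a flag through a rational family — but both are standard and both ultimately cite Fulton, and your explicit justification that the ideal $(s_\lambda\mid\lambda\not\subset B)$ has $\ZZ$-basis $\{s_\mu\mid\mu\not\subset B\}$ is a detail the paper glosses over.
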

 
 This is a pivotal theorem in the study of the Grassmannian, since it allows us to compute in the cohomology ring simply by working with symmetric polynomials. In particular, multiplying Schur functions corresponds to multiplying cohomology classes, which in turn gives us information about intersections of Schubert varieties.
 
  As an approach to prove Theorem \ref{thm:isom}, note that sending $\sigma_\lambda$ to $s_\lambda$ is an isomorphism of the underlying vector spaces, since on the right hand side we have quotiented by the Schur functions whose partition does not fit inside the ambient rectangle.  So, it remains to show that this isomorphism respects the multiplications in these rings, taking cup product to polynomial multiplication. 
 
 An important first step is the \textbf{Pieri Rule}.  For Schur functions, this tells us how to multiply a one-row shape by any other partition: $$s_{(r)}\cdot s_{\lambda}=\sum_{\nu/\lambda \text{ horz. strip of size }r} s_\nu.$$  We wish to show that the same relation holds for the $\sigma_\lambda$'s, that is, that $$\sigma_{(r)}\cdot \sigma_{\lambda}=\sum_{\nu/\lambda \text{ horz. strip of size }r} \sigma_\nu,$$ where the sum on the right is restricted to partitions $\nu$ fitting inside the ambient rectangle.  Note that we do not need this restriction for general Schur functions, but in the cohomology ring we are considering the quotient by partitions not fitting inside the ambient rectangle, so the two expansions above are not exactly the same.
 
 Note that, by the Duality Theorem, we can multiply both sides of the above relation by $\sigma_{\mu^c}$ to extract the coefficient of $\sigma_{\mu}$ on the right hand side.  So, the Pieri Rule is equivalent to the following restatement:
 
 \begin{theorem}[Pieri Rule]
 	Let $\lambda$ and $\mu$ be partitions with $|\lambda|+|\mu|=k(n-k)-r$. Then if $F_\bullet$, $E_\bullet$, and $H_\bullet$ are three sufficiently general flags then the intersection $$\Omega_\lambda(F_\bullet)\cap \Omega_{\mu}(E_\bullet) \cap \Omega_{(r)}(H_\bullet)$$ has $1$ element if $\mu^c/\lambda$ is a horizontal strip, and it is empty otherwise.
 \end{theorem}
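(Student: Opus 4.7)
The plan is to reduce by the Transverse Flags Lemma (Lemma~\ref{lem:transverse}) and the $\GL_n$-invariance of the claim to the case where $F_\bullet$ is the standard flag, $E_\bullet$ is the opposite flag, and $H_\bullet$ is a sufficiently generic third flag.  I would then describe the two-flag Richardson intersection $R := \Omega_\lambda(F_\bullet)\cap \Omega_\mu(E_\bullet)$ very explicitly via the matrix picture, and finally cut $R$ with the third Schubert condition.

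To describe $R$: a point $V\in \Omega_\lambda(F_\bullet)$ has a reduced row echelon matrix whose row-$i$ pivot lies in column $n-k+i-\lambda_i$ counted from the right, and imposing the opposite-flag condition for $\mu$ forces a large block of the remaining $*$-entries to vanish.  Carrying out this bookkeeping shows that $R$ is empty unless $\lambda\subseteq \mu^c$ (equivalently, $\lambda_i+\mu_{k+1-i}\le n-k$ for all $i$), in which case $R$ is irreducible, its open dense cell is an affine space $\isom \CC^r$ of dimension $r=k(n-k)-|\lambda|-|\mu|$, and the $r$ free coordinates are naturally indexed by the boxes of the skew shape $\mu^c/\lambda$.

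Next, since $\Omega_{(r)}(H_\bullet)$ has codimension $r$, intersecting it with $R$ should give a finite set.  The cleanest way to evaluate the count is by specialization: degenerate $H_{n-k+1-r}$ to a coordinate subspace $\langle e_{a_1},\ldots,e_{a_{n-k+1-r}}\rangle$ for a well-chosen increasing sequence $a_1<\cdots<a_{n-k+1-r}$ adapted to the pivot structure of $R$.  The triple intersection then collapses into a two-flag intersection governed by the Duality Theorem, and deformation invariance of intersection numbers in cohomology (or a direct limit argument on the one-parameter family of specializations) implies the generic count equals this specialized count.

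The main obstacle will be matching this specialized count to the horizontal-strip condition.  Geometrically, the new condition $\dim(V\cap L)\ge 1$ forces one row of the matrix for $V$ to have its leading nonzero entry inside $L$: if two cells of $\mu^c/\lambda$ sit in the same column, this extra row must share a column with a pre-existing pivot, forcing a rank drop that violates $\dim V=k$ and makes the intersection empty.  When $\mu^c/\lambda$ is a horizontal strip, the $r$ free cells lie in $r$ distinct columns, leaving exactly one admissible placement of the extra row and hence the unique solution claimed.
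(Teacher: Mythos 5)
Your overall frame -- reduce to the standard and opposite flags via Lemma \ref{lem:transverse}, describe the two-flag (Richardson) intersection $R$ explicitly, then cut with the third condition -- is the same strategy the paper sketches (and that Fulton carries out in detail). But the mechanism you propose for the decisive step has a genuine gap. The paper keeps $H_\bullet$ \emph{generic}; you instead degenerate $H_{n-k+1-r}$ to a coordinate subspace and invoke ``deformation invariance of intersection numbers.'' That invariance lives in cohomology: it guarantees the class of the intersection is constant, not the number of points. At a special position the scheme-theoretic intersection can be non-reduced or positive-dimensional, and semicontinuity only gives inequalities in one direction, so ``the generic count equals the specialized count'' does not follow unless you \emph{verify} that your specialized triple intersection is transverse (zero-dimensional and reduced). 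This is exactly where coordinate flags are dangerous: three coordinate flags are typically not in general position even when pairwise transverse, and a coordinate $L$ meets the relevant span $A=A_1+\cdots+A_k$ (with $A_i=F_{n-k+i-\lambda_i}\cap E_{\,\cdot}$) in a single \emph{coordinate} vector lying in just one $A_i$, which collapses the uniqueness argument and can leave a positive-dimensional family of solutions. Your closing ``rank drop'' heuristic for the non-horizontal-strip case inherits the same problem: it is an argument about a special $L$, and emptiness there does not imply emptiness for generic $L$.

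The fix is to argue directly with a generic $L$, as in the Duality Theorem's proof: every $V\in R$ is contained in $A=\sum A_i$, and $\dim A=k+r$ precisely when $\mu^c/\lambda$ is a horizontal strip (otherwise consecutive $A_i$ overlap and $\dim A<k+r$). A generic $L$ of dimension $n-k-r+1$ then satisfies $A\cap L=0$ in the non-strip case (forcing the intersection to be empty), while in the strip case $A\cap L$ is a line spanned by $v=v_1+\cdots+v_k$ with every $v_i\neq 0$, and $V=\langle v_1,\ldots,v_k\rangle$ is forced, giving the unique point. One smaller inaccuracy: the open cell of $R$ is an irreducible rational variety of dimension $r$, but it is generally not isomorphic to $\CC^r$ (already for $\lambda=\mu=\emptyset$ in $\Gr(2,1)$ it is $\CC^*$), so the ``explicit bookkeeping'' you defer would not come out quite as cleanly as stated, though this does not affect the dimension count you need.
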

 
 \begin{proof}[Sketch of Proof.]  We can set $F_\bullet$ and $E_\bullet$ to be the standard and opposite flags and $H_\bullet$ a generic flag distinct from $F_\bullet$ or $E_\bullet$.  We can then perform a direct analysis similar to that in the Duality Theorem.  See \cite{Fulton} for full details.
 \end{proof}

 Algebraically, the Pieri rule suffices to show the ring isomorphism, because the Schur functions $s_{(r)}$ and corresponding Schubert classes $\sigma_{(r)}$ form an algebraic set of generators for their respective rings.  Therefore, to intersect Schubert classes we simply have to understand how to multiply Schur functions.

   \subsection{The Littlewood-Richardson rule}\label{sec:LRrule}
   
   The combinatorial rule for multiplying Schur functions, or Schubert classes, is called the \textbf{Littlewood-Richardson Rule}.  To state it, we need to introduce a few new notions.
   
   \begin{definition}
   	A word $w_1w_2\cdots w_n$ (where each $w_i\in \{1,2,3,\ldots\}$) is \textbf{Yamanouchi} (or \textbf{lattice} or \textbf{ballot}) if every suffix $w_k w_{k+1}\cdots w_n$ contains at least as many letters equal to $i$ as $i+1$ for all $i$.  
   \end{definition}
     
   For instance, the word $231211$ is Yamanouchi, because the suffixes $1$, $11$, $211$, $1211$, $31211$, and $231211$ each contain at least as many $1$'s as $2$'s, and at least as many $2$'s as $3$'s.
     
   \begin{definition}
   	 A \textbf{Littlewood-Richardson tableau} is a semistandard Young tableau whose reading word is Yamanouchi.
   \end{definition}

    \begin{figure}[h]
    	
     \begin{center}
     	\includegraphics{LRTableau.pdf}
     \end{center}
     \caption{\label{fig:LR} An example of a skew Littlewood-Richardson tableau.}
    \end{figure}
    
   \begin{exercise}
   	The example tableau in Figure \ref{fig:LR} is \textbf{not} Littlewood-Richardson.  Why?  Can you find a tableau of that shape that is?
   \end{exercise}
   
   \begin{definition}
   	A sequence of skew tableaux $T_1, T_2,\ldots$ form a \textbf{chain} if their shapes do not overlap and $$T_1\cup T_2\cup \cdots \cup T_i$$ is a partition shape for all $i$.
   \end{definition}
   
   We can now state the general Littlewood-Richardson rule.  We will refer the reader to \cite{Fulton} for a proof, as the combinatorics is quite involved.
   
   \begin{theorem}\label{thm:LRrule}
   	We have
   	$$s_{\lambda^{(1)}}\cdot\cdots \cdot  s_{\lambda^{(m)}}=\sum_{\nu} c^{\nu}_{\lambda^{(1)},\ldots,\lambda^{(m)}} s_\nu$$ 
   	where $c^{\nu}_{\lambda^{(1)},\ldots,\lambda^{(m)}}$ is the number of chains of Littlewood-Richardson tableaux of contents $\lambda^{(i)}$ with total shape $\nu$.
   \end{theorem}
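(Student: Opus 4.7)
The plan is to first reduce the multi-factor statement to the classical two-factor Littlewood-Richardson rule, and then prove that rule by iterating the Pieri Rule together with a sign-reversing involution on the resulting combinatorial expansion. For the reduction, I would induct on $m$ using associativity in $\Lambda(x_1,x_2,\ldots)$. Assuming the two-factor identity
$$s_\mu \cdot s_\nu = \sum_{\kappa} c^{\kappa}_{\mu,\nu}\, s_\kappa,$$
where $c^{\kappa}_{\mu,\nu}$ counts Littlewood-Richardson tableaux of skew shape $\kappa/\mu$ with content $\nu$, an iterated expansion of $s_{\lambda^{(1)}} \cdots s_{\lambda^{(m)}}$ gives
$$c^{\nu}_{\lambda^{(1)},\ldots,\lambda^{(m)}} = \sum_{\lambda^{(1)} = \mu^{(1)} \subset \mu^{(2)} \subset \cdots \subset \mu^{(m)} = \nu} \prod_{i=2}^{m} c^{\mu^{(i)}}_{\mu^{(i-1)},\lambda^{(i)}}.$$
Each summand is precisely a chain of skew LR tableaux of shapes $\mu^{(i)}/\mu^{(i-1)}$ and contents $\lambda^{(i)}$ whose union is $\nu$, matching the chain description in the theorem statement.

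For the two-factor case, I would expand $s_\mu$ using the Jacobi-Trudi identity
$$s_\mu = \det\bigl(h_{\mu_i - i + j}\bigr)_{1 \le i,j \le \ell(\mu)},$$
where $h_r = s_{(r)}$. Multiplying on the left by $s_\lambda$ and applying the Pieri Rule to each factor $s_\lambda \cdot h_{r_1} \cdot h_{r_2} \cdots$ expresses the product as a signed sum over permutations $\sigma \in S_{\ell(\mu)}$ together with chains of horizontal strips $H_1, H_2, \ldots$ of sizes $\mu_{\sigma(i)} - \sigma(i) + i$ added successively to $\lambda$ to form some shape $\nu$. Labeling the boxes of $H_j$ with $j$ gives a semistandard skew tableau of shape $\nu/\lambda$ (because successive horizontal strips labeled with increasing integers satisfy the SSYT conditions), and the task is to show that, after cancellation, exactly the Yamanouchi fillings survive.

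The main obstacle is constructing the sign-reversing involution on non-Yamanouchi tableaux. The idea is to scan the reading word of a skew SSYT from right to left for the first position where the suffix fails the ballot condition --- i.e., the count of $(i+1)$'s first exceeds the count of $i$'s for some $i$ --- and then use the offending pair to perform a local swap between adjacent horizontal strips (equivalently, an adjacent transposition in $\sigma$) that toggles the sign while remaining an involution. Verifying that this swap is well-defined and genuinely involutive, and that the Yamanouchi tableaux are exactly the fixed points, is the combinatorial heart of the argument; a cleaner but equally substantial alternative is to prove the two-factor rule via jeu de taquin, showing that rectification is well-defined on skew SSYTs and that $c^{\nu}_{\lambda,\mu}$ equals the number of skew tableaux of shape $\nu/\lambda$ and content $\mu$ rectifying to the unique superstandard tableau of shape $\mu$. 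Either route involves the substantial combinatorics for which the paper defers to \cite{Fulton}.
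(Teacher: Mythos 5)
The paper does not actually prove this theorem; it states it and defers the proof entirely to Fulton's book, so there is no internal argument to compare yours against. Your reduction of the $m$-factor statement to the classical two-factor rule is correct and complete: iterating $s_\mu s_\nu=\sum_\kappa c^{\kappa}_{\mu,\nu}s_\kappa$ does give
$$c^{\nu}_{\lambda^{(1)},\ldots,\lambda^{(m)}}=\sum_{\lambda^{(1)}=\mu^{(1)}\subset\cdots\subset\mu^{(m)}=\nu}\ \prod_{i=2}^{m}c^{\mu^{(i)}}_{\mu^{(i-1)},\lambda^{(i)}},$$
and the summands biject with chains because the first tableau of any chain has straight shape, and an LR tableau of straight shape and content $\lambda^{(1)}$ is forced by the Yamanouchi and semistandard conditions to be the unique one of shape $\lambda^{(1)}$. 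This is exactly the argument the paper uses for its Corollary, run in the opposite direction, and there is no circularity provided the two-factor rule is established independently.

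The gap is in the two-factor case itself. The Jacobi--Trudi/Pieri expansion correctly writes $c^{\nu}_{\lambda\mu}$ as $\sum_{\sigma}\mathrm{sgn}(\sigma)$ times the number of skew SSYT of shape $\nu/\lambda$ and content $(\mu_i-i+\sigma(i))_i$, and it is true that a sign-reversing involution whose fixed points are exactly the ballot tableaux finishes the proof. But ``use the offending pair to perform a local swap between adjacent horizontal strips'' is not yet a construction. The delicate points are: (i) which boxes actually move when strips $i$ and $i+1$ are exchanged --- the new contents must be $r_{i+1}-1$ and $r_i+1$ to match the adjacent transposition in the determinant, so it is not a symmetric interchange; (ii) that the resulting filling is again a valid sequence of horizontal strips (semistandard); and (iii) that the map is genuinely an involution, which requires the position of the first ballot violation to be preserved by the swap. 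These are precisely the points where naive versions of the argument fail, and carrying them out (as in Stembridge's concise proof, or via the jeu de taquin route you mention, which requires proving rectification is well defined) is the real content of the theorem. You acknowledge this honestly, so the proposal should be read as a correct strategy with the combinatorial core outstanding --- which leaves it no less complete than the paper's own treatment, since the paper supplies no proof at all.
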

   
   It is worth noting that in many texts, the following corollary is the primary focus, since the above theorem can be easily derived from the $m=2$ case stated below.
   
   \begin{corollary}
   	 We have $$s_\lambda s_\mu = \sum_{\nu} c^{\nu}_{\lambda\mu}s_\nu$$ where $c^{\nu}_{\lambda\mu}$ is the number of Littlewood-Richardson tableaux of skew shape $\nu/\lambda$ and content $\mu$.
   \end{corollary}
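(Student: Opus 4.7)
The plan is to obtain this corollary by specializing Theorem~\ref{thm:LRrule} to $m = 2$. Setting $\lambda^{(1)} = \lambda$ and $\lambda^{(2)} = \mu$ yields
\begin{equation*}
  s_\lambda s_\mu = \sum_\nu c^\nu_{\lambda,\mu}\, s_\nu,
\end{equation*}
where $c^\nu_{\lambda,\mu}$ counts chains $(T_1, T_2)$ of Littlewood-Richardson tableaux with contents $\lambda$ and $\mu$ respectively and total shape $\nu$. The chain condition applied at $i = 1$ forces the shape of $T_1$ itself to be a partition, so $T_1$ is a straight-shape Littlewood-Richardson tableau of content $\lambda$. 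The real task is to pin $T_1$ down.

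I would prove the key intermediate lemma: the only Littlewood-Richardson tableau of straight shape with content $\lambda$ is the \emph{canonical} tableau of shape $\lambda$ in which the $i$-th row is filled entirely with $i$'s. The argument is a short induction on the row. Under the reading-word convention used in the paper (rows concatenated bottom to top, left to right within a row, as confirmed by the worked example), the one-letter suffix consisting of the rightmost entry of the top row must equal $1$ by the ballot condition, and weak row increase then forces the entire top row to consist of $1$'s. Inductively, assume rows $1,\ldots,j$ are filled with $i$'s in row $i$: column-strictness then forces every entry in row $j+1$ to be at least $j+1$, while the ballot condition applied to the suffix consisting of rows $1,\ldots,j$ together with the last entry of row $j+1$ pins that entry to be exactly $j+1$ (any larger value $c$ would give a suffix with zero $c{-}1$'s and one $c$). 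Weak row increase and the lower bound $\geq j+1$ then fill row $j+1$ entirely with $(j+1)$'s. Since the content equals $\lambda$, the resulting tableau must have shape exactly $\lambda$.

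With $T_1$ forced to be the canonical tableau of shape $\lambda$, any valid chain $(T_1, T_2)$ with total shape $\nu$ is determined by its second entry, and $T_2$ is then precisely a Littlewood-Richardson tableau of skew shape $\nu/\lambda$ and content $\mu$ in the sense of the corollary; conversely any such $T_2$ pairs with the canonical $T_1$ to give a valid chain (the chain condition $T_1$ being a partition and $T_1 \cup T_2 = \nu$ being a partition is automatic). Thus $c^\nu_{\lambda,\mu}$ from the theorem coincides with the corollary's $c^\nu_{\lambda\mu}$, completing the derivation. The single substantive step is the straight-shape uniqueness lemma in the second paragraph; everything else is unpacking the chain condition, so I expect the ballot-condition induction to be the main, albeit mild, obstacle.
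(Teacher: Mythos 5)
Your proposal is correct and follows essentially the same route as the paper: specialize the chain version of the Littlewood--Richardson rule to $m=2$, show by the ballot condition and semistandardness that the first (straight-shape) tableau of content $\lambda$ must be the canonical tableau with all $i$'s in row $i$, and identify the second tableau with a Littlewood--Richardson skew tableau of shape $\nu/\lambda$ and content $\mu$. Your induction merely fills in the details the paper leaves as ``continuing this reasoning inductively.''
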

   
   \begin{proof}
   	By Theorem \ref{thm:LRrule}, $c^{\nu}_{\lambda\mu}$ is the number of chains of two Littlewood-Richardson tableaux of content $\lambda$ and $\mu$ with total shape $\nu$.  The first tableau of content $\lambda$ is a straight shape tableau, so by the Yamanouchi reading word condition and the semistandard condition, the top row can only contain $1$'s.  Continuing this reasoning inductively, it has only $i$'s in its $i$th row for each $i$.  Therefore the first tableau in the chain is the unique tableau of shape $\lambda$ and content $\lambda$.
   	
   	Thus the second tableau is a Littlewood-Richardson tableau of shape $\nu/\lambda$ and content $\mu$, and the result follows.
   \end{proof}

   As a consequence of Theorem \ref{thm:LRrule} and Theorem \ref{thm:isom}, in $H^\ast(\Gr(n,k))$ we have
   $$\sigma_{\lambda^{(1)}}\cdot\cdots \cdot  \sigma_{\lambda^{(m)}}=\sum_{\nu} c^{\nu}_{\lambda^{(1)},\ldots,\lambda^{(m)}} \sigma_\nu$$ where now the sum on the right is restricted to partitions $\nu$ fitting inside the ambient rectangle.  Note that by the combinatorics of the Littlewood-Richardson rule, the coefficients on the right are nonzero only if $|\nu|=\sum |\lambda^{(i)}|$, and so in the case of a zero-dimensional intersection of Schubert varieties, the only possible $\nu$ on the right hand side is the ambient rectangle $B$ itself.  Moreover, $\Omega_B(F_\bullet)$ is a single point of $\Gr(n,k)$ for any flag $F_\bullet$. The zero-dimensional Littlewood-Richardson rule follows as a corollary.
   
   \begin{theorem}[Zero-Dimensional Littlewood-Richardson Rule]  
   	 Let $B$ be the $k\times (n-k)$ ambient rectangle, and let $\lambda^{(1)},\ldots,\lambda^{(m)}$ be partitions fitting inside $B$ such that $|B|=\sum_i |\lambda^{(i)}|$.  Also let $F^{(1)}_\bullet,\ldots, F^{(m)}_\bullet$ be any $m$ generic flags.  Then $$c^{B}_{\lambda^{(1)},\ldots,\lambda^{(m)}}:=|\Omega_{\lambda^{(1)}}(F^{(1)}_\bullet)\cap \cdots \cap \Omega_{\lambda^{(m)}}(F^{(m)}_\bullet)|$$ is equal to the number of chains of Littlewood-Richardson tableaux of contents $\lambda^{(1)},\ldots, \lambda^{(m)}$ with total shape equal to $B$.
   \end{theorem}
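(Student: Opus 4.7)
The plan is to deduce the statement as a direct corollary of the ring isomorphism in Theorem \ref{thm:isom} together with the general Littlewood-Richardson Rule (Theorem \ref{thm:LRrule}), by reading off the product of the relevant Schubert classes in the top cohomology of the Grassmannian.

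First I would translate the geometric intersection into an algebraic product. Since each $\Omega_{\lambda^{(i)}}(F^{(i)}_\bullet)$ represents the Schubert class $\sigma_{\lambda^{(i)}}\in H^{2|\lambda^{(i)}|}(\Gr(n,k))$ independently of the choice of flag, the class of the intersection is the cup product
$$\sigma_{\lambda^{(1)}}\cdots \sigma_{\lambda^{(m)}} \in H^{2\sum|\lambda^{(i)}|}(\Gr(n,k)) = H^{2k(n-k)}(\Gr(n,k)).$$
The top cohomology group is free of rank one, generated by the class $\sigma_B$ of a single point $\Omega_B(F_\bullet)$. Hence the product equals $c\cdot \sigma_B$ for a unique integer $c$, and the whole task is to identify $c$ combinatorially and to justify that, for generic flags, $c$ is literally the number of points in the set-theoretic intersection.

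Next I would identify $c$ using symmetric functions. By the isomorphism $H^\ast(\Gr(n,k))\cong \Lambda/(s_\lambda\mid \lambda\not\subset B)$ sending $\sigma_\mu \mapsto s_\mu$, the product $\sigma_{\lambda^{(1)}}\cdots \sigma_{\lambda^{(m)}}$ equals the image of $s_{\lambda^{(1)}}\cdots s_{\lambda^{(m)}}$. Theorem \ref{thm:LRrule} expands this product as $\sum_\nu c^{\nu}_{\lambda^{(1)},\ldots,\lambda^{(m)}} s_\nu$, and the combinatorics forces $|\nu|=\sum|\lambda^{(i)}|=|B|$; since every such $\nu$ fitting in $B$ must equal $B$ itself, only the $\nu=B$ term survives in the quotient. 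Therefore $c=c^{B}_{\lambda^{(1)},\ldots,\lambda^{(m)}}$, which by definition counts chains of Littlewood-Richardson tableaux of contents $\lambda^{(1)},\ldots,\lambda^{(m)}$ with total shape $B$.

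The main obstacle is the step I have glossed over, namely passing from the cohomological identity $[\bigcap \Omega_{\lambda^{(i)}}(F^{(i)}_\bullet)] = c\cdot \sigma_B$ to the set-theoretic count $|\bigcap \Omega_{\lambda^{(i)}}(F^{(i)}_\bullet)|=c$. This requires a transversality argument: for a sufficiently general choice of flags, the Schubert varieties $\Omega_{\lambda^{(i)}}(F^{(i)}_\bullet)$ meet transversely along their smooth loci, so the scheme-theoretic intersection is a reduced finite set whose number of points equals the degree of the intersection class in $H^{2k(n-k)}$. To justify this, I would invoke the Kleiman-Bertini transversality theorem applied to the transitive action of $\GL_n(\CC)$ on $\Gr(n,k)$: translating each Schubert variety by a generic element of $\GL_n$ produces a generic translate, and Kleiman's theorem guarantees that the resulting intersection is transverse, hence zero-dimensional and reduced of the correct cardinality. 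Combined with the computation of $c$ above, this yields the claimed equality.
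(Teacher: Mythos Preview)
Your proposal is correct and follows essentially the same route as the paper: derive the result as a corollary of Theorem \ref{thm:isom} and Theorem \ref{thm:LRrule}, noting that only $\nu=B$ survives in top degree and that $\Omega_B$ is a single point. If anything, you are more careful than the paper about the transversality step---the paper simply asserts that for sufficiently general flags the intersection points are distinct with no multiplicity, whereas you explicitly invoke Kleiman--Bertini.
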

   
   \begin{example}
   	 Suppose $k=3$ and  $n-k=4$.  Let $\lambda^{(1)}=(2,1)$, $\lambda^{(2)}=(2,1)$, $\lambda^{(3)}=(3,1)$, and $\lambda^{(4)}=2$.  Then there are five different chains of Littlewood-Richardson tableaux of contents $\lambda^{(1)},\ldots,\lambda^{(4)}$ that fill the $k\times (n-k)$ ambient rectangle, as shown in Figure \ref{fig:5}.  Thus $c^{B}_{\lambda^{(1)},\ldots,\lambda^{(4)}}=5$.
   \end{example}
   
   \begin{figure}
   	\begin{center}
   		\includegraphics{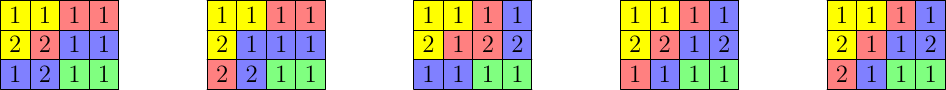}
   	\end{center}
   	\caption{\label{fig:5} The five chains of Littlewood-Richardson tableaux of contents $\lambda^{(1)}=(2,1)$, $\lambda^{(2)}=(2,1)$, $\lambda^{(3)}=(3,1)$, and $\lambda^{(4)}=2$ filling an ambient $3\times 4$ rectangle.}
   \end{figure}
   
   \begin{example}\label{LinesSolved}
   	 We can now solve Question \ref{lines}.  In Example \ref{ExampleLines} we showed that it suffices to compute the coefficient $c$ in the expansion $$\sigma_{\square}\cdot \sigma_\square\cdot \sigma_\square\cdot \sigma_\square=c \cdot \sigma_{\scalebox{0.3}{\yng(2,2)}}$$ in $H^\ast(\Gr(4,2))$.  This is the Littlewood-Richardson coefficient $c^{(2,2)}_{\square,\square,\square,\square}$.  This is the number of ways to fill a $2\times 2$ ambient rectangle with a chain of Littlewood-Richardson tableaux having one box each.  
   	 
   	 Since such a tableau can only have a single $1$ as its entry, we will label the entries with subscripts indicating the step in the chain to distinguish them.  We have two possibilities, as shown in Figure \ref{fig:2}.	 Therefore the coefficient is $2$, and so there are $2$ lines passing through four generic lines in $\PP^4$.
   \end{example}

\begin{figure}
	\begin{center}
		\includegraphics{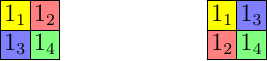}
	\end{center}
	\caption{\label{fig:2}The two tableaux chains used to enumerate the Littlewood-Richardson coefficient that answers Question \ref{lines}.}
\end{figure}

In Example \ref{LinesSolved}, we are in the special case in which each Littlewood-Richardson tableau in the chain has only one box, and so the only choice we have is the ordering of the boxes in a way that forms a chain.  We can therefore simply represent such a tableau by its indices instead, and the two tableaux of Figure \ref{fig:2} become $$\young(12,34)\hspace{0.5cm}\text{ and }\hspace{0.5cm}\young(13,24).$$

The two tableaux above are characterized by the property that the entries $1,2,3,4$ are used exactly once and the rows and columns are strictly increasing.  Such a tableau is called a \textit{standard Young tableaux}.

\begin{definition}
	A \textbf{standard Young tableau} of shape $\lambda$ with $|\lambda|=n$ is an SSYT of shape $\lambda$ in which the numbers $1,2,\ldots,n$ are each used exactly once.
\end{definition}

There is a well-known explicit formula, known as the \textbf{Hook length formula}, for the number of standard Young tableaux of a given shape, due to Frame, Robinson, and Thrall \cite{FrameRobinsonThrall}.  To state it we need the following definition.

\begin{definition}
	For a square $s$ in a Young diagram, define the \textbf{hook length} $$\hook(s)=\mathrm{arm}(s)+\mathrm{leg}(s)+1$$ where $\mathrm{arm}(s)$ is the number of squares strictly to the right of $s$ in its row and $\mathrm{leg}(s)$ is the number of squares strictly below $s$ in its column. 
\end{definition}

\begin{theorem} (Hook length formula.)
	The number of standard Young tableaux of shape $\lambda$ is $$\frac{|\lambda|!}{\prod_{s\in \lambda}\hook(s)}.$$
\end{theorem}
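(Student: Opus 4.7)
The plan is to proceed by induction on $n = |\lambda|$ using the classical branching rule for standard Young tableaux together with a probabilistic identity due to Greene, Nijenhuis, and Wilf. Write $f^\lambda$ for the number of standard Young tableaux of shape $\lambda$ and set $H^\lambda := |\lambda|!/\prod_{s \in \lambda} \hook(s)$, so the claim is $f^\lambda = H^\lambda$. The starting point is the branching recursion
$$f^\lambda = \sum_{c} f^{\lambda \setminus c},$$
where $c$ ranges over the removable corners of $\lambda$. This is immediate: in any SYT of shape $\lambda$ the cell containing the maximum entry $n$ must sit at a corner, and deleting it leaves an SYT of shape $\lambda \setminus c$. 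The base case $\lambda = (1)$ is trivial. By the inductive hypothesis $f^{\lambda \setminus c} = H^{\lambda \setminus c}$, so it suffices to verify
$$\sum_{c\ \text{corner}} \frac{H^{\lambda \setminus c}}{H^{\lambda}} = 1.$$

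To unpack the ratio, fix a corner $c = (a,b)$ and observe that removing $c$ changes only the hook lengths of cells lying weakly above $c$ in column $b$ or weakly left of $c$ in row $a$, and each such hook length decreases by exactly $1$. A direct computation gives
$$\frac{H^{\lambda \setminus c}}{H^\lambda} = \frac{1}{n} \prod_{i < a} \frac{\hook_\lambda(i,b)}{\hook_\lambda(i,b) - 1} \prod_{j < b} \frac{\hook_\lambda(a,j)}{\hook_\lambda(a,j) - 1}.$$
So the whole problem reduces to a purely combinatorial identity on the Young diagram: that the sum of these products over corners equals $n$.

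The key step is to interpret this identity probabilistically via a \emph{hook walk}. Start at a cell chosen uniformly at random from the $n$ cells of $\lambda$ and iterate: at each step, if one is not yet at a corner, jump to a uniformly chosen cell among the other cells in the current hook (i.e.\ strictly below in the same column, or strictly right in the same row). The walk terminates almost surely at some corner $c$. By unwinding the walk's transition probabilities and telescoping, one shows that the probability of terminating at a given corner $c = (a,b)$ equals precisely the product expression displayed above. Since the termination probabilities sum to $1$, the identity follows.

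The main obstacle is the last step: proving that the termination probability at $c = (a,b)$ has the claimed product form. This requires an induction over pairs of possible entry rows/columns into $c$, exploiting the algebraic decomposition
$$\frac{h}{h-1} = 1 + \frac{1}{h-1}$$
so that the sum of probabilities over all walks reaching $c$ from a given starting cell collapses telescopically into a single product. The combinatorics of this cancellation is the only technically demanding part of the argument; once it is in place, the hook length formula follows immediately from the branching recursion and the inductive hypothesis.
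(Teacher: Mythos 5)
The paper itself gives no proof of the hook length formula; it simply states the result and cites Frame--Robinson--Thrall, so there is nothing in the text to compare your argument against step by step. What you have written is the well-known probabilistic proof of Greene, Nijenhuis, and Wilf, and your skeleton is the right one: the branching recursion $f^\lambda=\sum_c f^{\lambda\setminus c}$ is correct (the cell containing $n$ must be a removable corner), your computation of the ratio $H^{\lambda\setminus c}/H^\lambda$ is correct (only the hooks in row $a$ and column $b$ change, each dropping by $1$, and the hook of $c$ itself is $1$), and the reduction to the identity $\sum_c H^{\lambda\setminus c}/H^\lambda=1$ together with the hook-walk interpretation is exactly how this proof goes. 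This buys something the paper does not offer: a self-contained, elementary, and conceptually transparent argument, at the cost of one genuinely technical lemma.

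That lemma is the one place where your write-up is a sketch rather than a proof. Asserting that "unwinding and telescoping" yields the product $\frac{1}{n}\prod_{i<a}\frac{\hook(i,b)}{\hook(i,b)-1}\prod_{j<b}\frac{\hook(a,j)}{\hook(a,j)-1}$ is not yet a proof of it. To close the gap you should state the precise intermediate claim: for a walk started at a fixed cell $(x,y)$ with $x\le a$ and $y\le b$, the probability that it terminates at $(a,b)$ having visited exactly the rows $A\ni a$ and columns $B\ni b$ equals $\prod_{i\in A\setminus\{a\}}\frac{1}{\hook(i,b)-1}\cdot\prod_{j\in B\setminus\{b\}}\frac{1}{\hook(a,j)-1}$, proved by induction on $|A|+|B|$ by conditioning on the first step; summing over all starting cells and all admissible $(A,B)$, and using $\prod\bigl(1+\frac{1}{h-1}\bigr)=\prod\frac{h}{h-1}$, then gives the termination probability in the claimed form. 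You have correctly identified both the statement needed and the algebraic identity that drives it, so this is a fixable incompleteness rather than a wrong turn; but as written the central identity is assumed rather than established.
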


For example, if $\lambda=(2,2)$ then we have $\frac{4!}{3\cdot 2\cdot 2 \cdot 1}=2$ standard Young tableaux of shape $\lambda$, which matches our answer in Example \ref{LinesSolved}.

  \subsection{Problems}
  
  \begin{enumerate}
    
    \item \textbf{Prove Lemma \ref{lem:transverse}:} 	For any transverse flags $F'_\bullet$ and $E'_\bullet$, there is some $g\in \GL_n$ such that $gF'_\bullet=F_\bullet$ and $gE'_\bullet=E_\bullet$, where $F_\bullet$ and $E_\bullet$ are the standard and opposite flags. 
    
    \item \textbf{It's all Littlewood-Richardson:} Verify that the Duality Theorem and the Pieri Rule are both special cases of the Littlewood-Richardson rule.
    
    \item \textbf{An empty intersection:} Show that $$\Omega_{(1,1)}(F_\bullet)\cap \Omega_{(2)}(E_\bullet)=\emptyset$$ in $\Gr(4,2)$ for transverse flags $F_\bullet$ and $E_\bullet$.  What does this mean geometrically?
    
    \item \textbf{A nonempty intersection:}  Show that $$\Omega_{(1,1)}(F_\bullet)\cap \Omega_{(2)}(E_\bullet)$$ is nonempty in $\Gr(5,2)$.  (Hint: intersecting it with a certain third Schubert variety will be nonempty by the Littlewood-Richardson rule.)  What does this mean geometrically?
    
    \item \textbf{Problem \ref{2-planes} revisited:}  In $\PP^4$, suppose the $2$-planes $A$ and $B$ intersect in a point $X$, and $P$ and $Q$ are distinct points different from $X$.  Show that there is exactly one plane $C$ that contains both $P$ and $Q$ and intersect $A$ and $B$ each in a line as an intersection of Schubert varieties in $\Gr(5,3)$, in each of the following cases:
    \begin{enumerate}
    	\item When $P$ is contained in $A$ and $Q$ is contained in $B$;
    	\item When neither $P$ nor $Q$ lie on $A$ or $B$.
    \end{enumerate}
    
  	\item \textbf{That's a lot of $k$-planes:} Solve Question \ref{complicated} for a generic choice of flags as follows.
  	\begin{enumerate}
  		\item Verify that the problem boils down to computing the coefficient of $s_{((n-k)^{k})}$ in the product of Schur functions $s_{(1)}^{k(n-k)}$.
  		\item Use the Hook Length Formula to finish the computation.
  	\end{enumerate}
  	
  \end{enumerate}

\pagebreak
  

\section{Variation 2: The flag variety}\label{sec:variation2}
    
    For the content in this section, we refer to \cite{Manivel}, unless otherwise noted below.
    
    The (complete) \textbf{flag variety} (in dimension $n$) is the set of all complete flags in $\mathbb{C}^n$, with a Schubert cell decomposition similar to that of the Grassmannian.   In particular, given a flag $$V_{\bullet}: V_0\subset V_1\subset \cdots V_n=\CC^n,$$ we can choose $n$ vectors $v_1,\ldots,v_n$ such that the span of $v_1,\ldots,v_i$ is $V_i$ for each $i$, and list the vectors $v_i$ as row vectors of an $n\times n$ matrix.  We can then perform certain row reduction operations to form a different ordered basis $v_1^\prime,\ldots,v_n^\prime$ that also span the subspaces of the flag, but whose matrix entries consist of a permutation matrix of $1$'s, all $0$'s to the left and below each $1$, and arbitrary complex numbers in all other entries.
    
    For instance, say we start with the flag in three dimensions generated by the vectors $(0,2,3)$, $(1, 1, 4)$, and $(1, 2, -3)$.  The corresponding matrix is $$\left(\begin{array}{ccc} 0 & 2 & 3 \\ 1 & 1 & 4 \\ 1 & 2 & -3\end{array}\right).$$  We start by finding the leftmost nonzero element in the first row and scale that row so that this element is $1$.  Then subtract multiples of this row from the rows below it so that all the entries below that $1$ are $0$.  Continue the process on all further rows:
    
    $$\left(\begin{array}{ccc} 0 & 2 & 3 \\ 1 & 1 & 4 \\ 1 & 2 & -3\end{array}\right) \to \left(\begin{array}{ccc} 0 & 1 & 1.5 \\ 1 & 0 & 2.5 \\ 1 & 0 & -6\end{array}\right)\to \left(\begin{array}{ccc} 0 & 1 & 1.5 \\ 1 & 0 & 2.5 \\ 0 & 0 & 1\end{array}\right)$$
    
    It is easy to see that this process does not change the flag formed by the initial row spans, and that any two matrices in canonical form define different flags.  So, the flag variety is a cell complex consisting of $n!$ \textbf{Schubert cells} indexed by permutations.   For instance, one such open set in the $5$-dimensional flag variety is the open set given by all matrices of the form 
    $$\left(\begin{array}{ccccc} 
    0 & 1 & \ast &  \ast & \ast  \\ 
    1 & 0 & \ast &  \ast & \ast  \\ 
    0 & 0 & 0 & 0 & 1  \\
    0 & 0 & 1 & \ast & 0 \\
    0 & 0 & 0 & 1 & 0 \end{array}\right)$$
    We call this cell $X_{45132}^\circ$ because $4,5,1,3,2$ are the positions of the $1$'s from the right hand side of the matrix in order from top to bottom.  More rigorously, we define a Schubert cell as follows.
    
    \begin{definition}
    	Let $w\in S_n$ be a permutation of $\{1,\ldots,n\}$.  Then the \textbf{Schubert cell} of $w$ is defined by 
    	$$X_w^\circ=\{V_\bullet\in \Fl_n:\dim(V_p\cap F_q)=\#\{i\le p:w(i)\le q\} \text{ for all }p,q\}$$
    	where $F_\bullet$ is the standard flag generated by the unit vectors $e_{n+1-i}$.  In the matrix form above, the columns are ordered from right to left as before.
    \end{definition} 
    
    Note that, as in the case of the Grassmannian, we can choose a different flag $F_\bullet$ with respect to which we define our Schubert cell decomposition, and we define $X_w^\circ(F_\bullet)$ accordingly.
    
    The dimension of a Schubert cell $X_w$ is the number of $\ast$'s in its matrix, that is, the number of entries above and right of the pivot $1$ in its row and column.  The maximum number of $\ast$'s occurs when the permutation is $w_0=n(n-1)\cdots 3 2 1$, in which case the dimension of the open set $X_{w_0}$ is $n(n-1)/2$ (or $n(n-1)$ over $\RR$).  In general, it is not hard to see that the number of $\ast$'s in the set $X_w$ is the \textbf{inversion number} $\inv(w)$.  This is defined to be the number of pairs of entries $(w(i),w(j))$ of $w$ which are out of order, that is, $i<j$ and $w(i)>w(j)$.  Thus we have $$\dim(X_w^\circ)=\inv(w).$$
    
    \begin{example}
    	The permutation $w=45132$ has seven inversions.  (Can you find them all?)  We also see that $\dim(X_w^\circ)=7$, since there are seven $\ast$ entries in the matrix.
    \end{example}
    
    Another useful way to think of $\inv(w)$ is in terms of its \textbf{length}.  
    
    \begin{definition}
    	Define $s_1,\ldots,s_{n-1}\in S_n$ to be the \textit{adjacent transpositions} in the symmetric group, that is, $s_i$ is the permutation interchanging $i$ and $i+1$.  Then the \textbf{length} of $w$, written $\ell(w)$, is the smallest number $k$ for which there exists a decomposition $$w=s_{i_1}\cdots s_{i_k}.$$
    \end{definition}
    
    \begin{lemma}
    	We have $\ell(w)=\inv(w)$ for any $w\in S_n$.
    \end{lemma}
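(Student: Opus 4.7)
My plan is to prove both inequalities $\inv(w) \le \ell(w)$ and $\ell(w) \le \inv(w)$ separately, using the natural action of adjacent transpositions on inversions.

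First I would establish the key lemma that multiplying $w$ on the right by an adjacent transposition $s_i$ changes the inversion count by exactly $\pm 1$. Concretely, $ws_i$ is the permutation obtained from $w$ by swapping the values in positions $i$ and $i+1$; so among all inversion pairs only the pair at positions $(i,i+1)$ is affected. If $w(i) < w(i+1)$ then $\inv(ws_i) = \inv(w) + 1$, and if $w(i) > w(i+1)$ then $\inv(ws_i) = \inv(w) - 1$. This is a direct check from the definition.

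For the upper bound $\inv(w) \le \ell(w)$, I would take any expression $w = s_{i_1} \cdots s_{i_k}$ with $k = \ell(w)$, and note that starting from the identity (which has zero inversions) and multiplying by one $s_{i_j}$ at a time, the inversion count goes up or down by $1$ at each step. After $k$ steps we reach $w$, so $\inv(w) \le k = \ell(w)$.

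For the reverse inequality $\ell(w) \le \inv(w)$, I would give an algorithmic decomposition via bubble sort. As long as $w \ne \mathrm{id}$, there must exist some $i$ with $w(i) > w(i+1)$ (otherwise $w$ is the increasing sequence, i.e.\ the identity). Multiplying on the right by $s_i$ produces $ws_i$ with $\inv(ws_i) = \inv(w) - 1$ by the key lemma. Iterating, after exactly $\inv(w)$ such swaps we reach the identity, which expresses $w^{-1}$ (and hence $w$) as a product of $\inv(w)$ adjacent transpositions, giving $\ell(w) \le \inv(w)$.

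The main obstacle is really just the bookkeeping in the key lemma, which is purely combinatorial and routine; once that is in hand, both directions fall out immediately. One minor subtlety is being consistent about whether we act on positions or values (i.e.\ left versus right multiplication by $s_i$); I would fix the convention at the outset and use right multiplication throughout so that $s_i$ swaps the entries in positions $i$ and $i+1$.
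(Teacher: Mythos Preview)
Your proof is correct and is the standard argument: the key lemma that right-multiplication by $s_i$ changes $\inv$ by exactly $\pm 1$ (only the pair in positions $(i,i+1)$ is affected, since for any other position $a$ the two relevant comparisons involving $w(i)$ and $w(i+1)$ simply trade places) immediately yields both inequalities via the two arguments you sketch.

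There is nothing to compare against in the paper: the lemma is stated without proof and explicitly deferred to the Problems section as an exercise (``We will leave the proof of this lemma as an exercise to the reader''). Your write-up would serve perfectly well as the intended solution. One tiny cosmetic point: in the last step you don't actually need to pass through $w^{-1}$, since $w s_{i_1}\cdots s_{i_m} = \mathrm{id}$ already gives $w = s_{i_m}\cdots s_{i_1}$ directly; but what you wrote is also fine because $\ell(w) = \ell(w^{-1})$.
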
 
    
    We will leave the proof of this lemma as an exercise to the reader in the Problems Section.  
    
    \subsection{Schubert varieties and the Bruhat order}
    
     By using the Pl\"ucker embeddings $\Gr(n,k)\hookrightarrow \PP^{\binom{n}{k}-1}$ for each $k$, we can embed $\Fl_n$ into the larger projective space $\PP^{2^n-1}$ whose entries correspond to the Pl\"ucker coordinates of each of the initial $k\times n$ submatrices of a given element of the flag variety.  This makes $\Fl_n$ a projective subvariety of $\PP^{2^n-1}$ (see \cite{Fulton} for more details), which in turn gives rise to a topology on $\Fl_n$, known as the Zariski topology.   Now, consider the closures of the sets $X_w^\circ$ in this topology. 
     
    \begin{definition}
    	The \textbf{Schubert variety} corresponding to a permutation $w\in S_n$ is $$X_w=\overline{X_w^\circ}.$$
    \end{definition}
    
    As in the Grassmannian, these Schubert varieties turn out to be disjoint unions of Schubert cells.  The partial ordering in which $X_w=\sqcup_{v\le w} X_v^\circ$ is called the \textbf{Bruhat order}, a well-known partial order on permutations.  We will briefly review it here, but we refer to \cite{BjornerBrenti} for an excellent introduction to Bruhat order.
    
    \begin{definition}
      The \textbf{Bruhat order} $\le$ on $S_n$ is defined by $v\le w$ if and only if, for every representation of $w$ as a product of $l(w)$ transpositions $s_i$, one can remove $l(w)-l(v)$ of the transpositions to obtain a representation of $v$ as a subword in the same relative order.
    \end{definition}
    
    \begin{example}
      The permutation $w=45132$ can be written as $s_2s_3s_2s_1s_4s_3s_2$.  This contains $s_3s_2s_3=14325$ as a (non-consecutive) subword, and so $14325\le 45132$.
    \end{example}

    \subsection{Intersections and Duality}
    
    Now suppose we wish to answer incidence questions about our flags: which flags satisfy certain linear constraints?  As in the case of the Grassmannian, this boils down to understanding how the Schubert varieties $X_w$ intersect. 
    
    We start with the Duality Theorem for $\Fl_n$.  Following \cite{Fulton}, it will be convenient to define dual Schubert varieties as follows.
    
    \begin{definition}
    	Let $E_\bullet$ be the standard and opposite flags, and for shorthand we let $X_w=X_w(F_\bullet)$ and $$Y_w=X_{w_0\cdot w}(E_\bullet)$$ where $w_0=n(n-1)\cdots 1$ is the longest word.  
    	The set $Y_w$ is often called a \textbf{dual Schubert variety}. 
    \end{definition}  
    
    Notice that $$\dim(Y_w)=\inv(w_0\cdot w)=n(n-1)/2-\inv(w)$$ since if $w'=w_0\cdot w$ then $w'(i)=n+1-w(i)$ for all $i$.
    
    \begin{theorem}[Duality Theorem, V2.]
      If $l(w)= l(v)$, we have $X_w\cap Y_v=\emptyset$ if $w\neq v$ and $|X_w\cap Y_v|=1$ if $w=v$.  Furthermore, if $l(w)<l(v)$ then $X_w\cap Y_v=\emptyset$.
    \end{theorem}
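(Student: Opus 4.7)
The plan is to mimic the proof of the Grassmannian Duality Theorem, exploiting transversality of the flags $F_\bullet$ and $E_\bullet$ together with the cell decomposition of each Schubert variety. The essential input is that $F_q \cap E_{n-q} = 0$ for every $q$, so any subspace $V \subseteq \CC^n$ satisfies $\dim(V \cap F_q) + \dim(V \cap E_{n-q}) \le \dim V$.

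First I would set up the cell decomposition of the intersection. We have $X_w = \bigsqcup_{u \le w} X_u^\circ$ as stated earlier, and analogously $Y_v = \bigsqcup_{u' \ge v} Y_{u'}^\circ$, which follows from $Y_v = X_{w_0 v}(E_\bullet)$ together with the fact that left-multiplication by $w_0$ reverses Bruhat order. This yields
$$X_w \cap Y_v \;=\; \bigsqcup_{u \le w,\ u' \ge v} \bigl(X_u^\circ \cap Y_{u'}^\circ\bigr),$$
so it suffices to understand when each cell-intersection on the right is nonempty.

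The key step is to show $X_u^\circ \cap Y_{u'}^\circ = \emptyset$ unless $u' \le u$ in Bruhat order. Writing $r_\sigma(p,q) = \#\{i \le p : \sigma(i) \le q\}$ for the rank matrix of a permutation, the defining equalities of the two cells give $\dim(V_p \cap F_q) = r_u(p,q)$ and $\dim(V_p \cap E_{n-q}) = p - r_{u'}(p,q)$ for any $V_\bullet$ in the intersection. Summing and applying the transversality inequality yields $r_u(p,q) \le r_{u'}(p,q)$ for all $p,q$, which by the standard rank criterion for Bruhat order is equivalent to $u' \le u$. Hence every nonempty piece contributes to a chain $v \le u' \le u \le w$, forcing $\ell(v) \le \ell(w)$. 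If $\ell(w) < \ell(v)$ no piece is nonempty and $X_w \cap Y_v = \emptyset$; if $\ell(w) = \ell(v)$ then all four lengths coincide, so Bruhat antisymmetry gives $u = u' = v = w$, and the intersection is empty unless $w = v$, in which case only the single piece $X_w^\circ \cap Y_w^\circ$ contributes.

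Finally I would show $X_w^\circ \cap Y_w^\circ$ is a single point. With $u = u' = w$ the transversality inequality becomes equality, giving the direct-sum decomposition $V_p = (V_p \cap F_q) \oplus (V_p \cap E_{n-q})$ for all $p,q$. Since $r_w(p,q)$ jumps by one precisely at $q = w(i)$ for $i \le p$, comparing $V_p \cap F_q$ with $V_p \cap F_{q-1}$ and applying the decomposition just above that jump extracts the basis vector $e_{n+1-q}$ as an element of $V_p$; iterating over $q \in \{w(1), \ldots, w(p)\}$ pins down $V_p = \langle e_{n+1-w(1)}, \ldots, e_{n+1-w(p)} \rangle$, the unique coordinate flag attached to $w$. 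The main obstacle of the plan is the rank criterion for Bruhat order invoked in the third paragraph; it is standard but nontrivial and would need to be cited or sketched separately. Everything else amounts to dimensional bookkeeping paralleling the Grassmannian case.
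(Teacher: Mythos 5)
Your argument is correct, and it takes a more rigorous route than the paper, which essentially only sketches this theorem. The paper's proof consists of the remark that the canonical matrix representatives of the open cells $X_w^\circ(F_\bullet)$ and $Y_w^\circ=X_{w_0w}^\circ(E_\bullet)$ have their $1$'s in the same positions but their free entries in complementary positions, so the two cells meet only in the coordinate flag of $w$; the emptiness assertions for $v\neq w$ and for $\ell(w)<\ell(v)$, which concern the closures and not just the open cells, are not really addressed there. You supply exactly what is missing: the Bruhat decompositions $X_w=\bigsqcup_{u\le w}X_u^\circ$ and $Y_v=\bigsqcup_{u'\ge v}Y_{u'}^\circ$, the translation of transversality into the pointwise inequality $r_u(p,q)\le r_{u'}(p,q)$, and the rank criterion identifying this with $u'\le u$, which forces the chain $v\le u'\le u\le w$ and hence all the emptiness claims (including the correct use of the fact that $a\le b$ with $\ell(a)=\ell(b)$ forces $a=b$). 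Your final step, extracting the one-dimensional coordinate line $F_q\cap E_{n-q+1}$ from the direct-sum decomposition of $V_p$ at each jump of $r_w(p,\cdot)$, is a clean intrinsic replacement for the paper's matrix computation and correctly pins down the unique flag. What your approach buys is a complete proof of the parts the matrix picture glosses over; what it costs is the two external inputs you name, namely the rank-matrix characterization of Bruhat order and the cell decomposition of $X_w$ --- the paper itself asserts the latter without proof, and the former is standard (Bj\"orner--Brenti), so citing them is appropriate.
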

    
    The proof works similarly to the Duality Theorem in the Grassmannian.  In particular, with respect to the standard basis, the dual Schubert variety $Y_w$ is formed by the same permutation matrix of $1$'s as in $X_w$, but with the $0$ entries below and to the \textit{right} of the $1$'s (and $\ast$ entries elsewhere).  For instance, we have 
    
    $$X_{45132}=\left(\begin{array}{ccccc} 
    0 & 1 & \ast &  \ast & \ast  \\ 
    1 & 0 & \ast &  \ast & \ast  \\ 
    0 & 0 & 0 & 0 & 1  \\
    0 & 0 & 1 & \ast & 0 \\
    0 & 0 & 0 & 1 & 0 \end{array}\right),
    \hspace{2cm}
    Y_{45132}=\left(\begin{array}{ccccc} 
    \ast & 1    & 0    & 0    & 0  \\ 
    1    & 0    & 0    & 0    & 0  \\ 
    0    & 0    & \ast & \ast & 1  \\
    0    & 0    & 1    & 0    & 0  \\
    0    & 0    & 0    & 1    & 0 \end{array}\right)
    $$
    and their intersection contains only the permutation matrix determined by $w=45132$.
    
    \subsection{Schubert polynomials and the cohomology ring}
    
    In order to continue our variation on the theme, it would be natural at this point to look for a Pieri rule or a Littlewood-Richardson rule.  But just as the cohomology ring of the Grassmannian and the Schur functions made those rules more natural, we now turn to \textbf{Schubert polynomials} and the cohomology ring $H^\ast(\Fl_n)$ over $\ZZ$.
    
    This ring has a natural interpretation as a quotient of a polynomial ring.   In particular, letting $\sigma_w$ be the cohomology class of $Y_w$, we have $\sigma_w\in H^{2i}(\Fl_n)$ where $i=\inv(w)$.  For the transpositions $s_i$, we have $\sigma_{s_i}\in H^2(\Fl_n)$.  The elements $x_i=\sigma_{s_i}-\sigma_{s_{i+1}}$ for $i\le n-1$ and $x_n=-\sigma_{s_{n-1}}$ gives a set of generators for the cohomology ring, and in fact $$H^\ast(\Fl_n)=\mathbb{Z}[x_1,\ldots,x_n]/(e_1,\ldots,e_n)=:R_n$$ where $e_1,\ldots,e_n$ are the elementary symmetric polynomials in $x_1,\ldots,x_n$.   (See \cite{Fulton} or \cite{BGG}.) 
    
     The ring $R_n$ is known as the \textbf{coinvariant ring} and arises in many geometric and combinatorial contexts.  Often defined over a field $k$ rather than $\mathbb{Z}$, its dimension as a $k$-vector space (or rank as a $\mathbb{Z}$-module) is $n!$.  There are many natural bases for $R_n$ of size $n!$, such as the monomial basis given by $$\{x_1^{a_1}\cdots x_n^{a_n}: a_i\le n-i\text{ for all }i\}$$ (see, for instance \cite{GarsiaProcesi}), the harmonic polynomial basis (see \cite{Bergeron}, Section 8.4) and the Schubert basis described below.  There are also many famous generalizations of the coinvariant ring, such as the Garsia-Procesi modules \cite{GarsiaProcesi} and the diagonal coinvariants (see \cite{Bergeron}, Chapter 10), which are closely tied to the study of Macdonald polynomials in symmetric function theory \cite{Macdonald}.
    
    The Schubert polynomials form a basis of $R_n$ whose product corresponds to the intersection of Schubert varieties.  To define them, we require a divided difference operator.
    
    \begin{definition}
    	For any polynomial $P(x_1,\ldots,x_n)\in \mathbb{Z}[x_1,\ldots,x_n]$, we define $$\partial_i(P)=\frac{P-s_i(P)}{x_i-x_{i+1}}$$ where $s_i(P)=P(x_1,\ldots,x_{i-1},x_{i+1},x_i,x_{i+2},\ldots,x_n)$ is the polynomial formed by switching $x_i$ and $x_{i+1}$ in $P$.
    \end{definition}
    
    We can use these operators to recursively define the Schubert polynomials.
    
    \begin{definition}\label{def:Schubert}
    	We define the Schubert polynomials $\Sch_w$ for $w\in S_n$ by:
    	\begin{itemize}
    		\item $\Sch_{w_0}=x_1^{n-1}x_2^{n-2}\cdots x_{n-2}^2 x_{n-1}$ where $w_0=n(n-1)\cdots 21$ is the longest permutation,
    	    \item If $w\neq w_0$, find a minimal factoriation of the form $w=w_0\cdot s_{i_1}\cdot \cdots s_{i_r}$ is a minimal factorization of its form, that is, a factorization for which $\ell(w_0\cdot s_{i_1}\cdot \cdots s_{i_p})=n-p$ for all $1\le p\le r$.  Then $$\Sch_w=\partial_{i_r}\circ \partial_{i_{r-1}}\circ \cdots \circ \partial_{i_1}(\Sch_{w_0})$$
    	\end{itemize}
    \end{definition}
    
    \begin{remark}\label{rem:braid}
     One can show that the operators $\partial_i$ satisfy the two relations below.
     \begin{itemize}
     	\item \textbf{Commutation Relation:} $\partial_i\partial_j=\partial_j\partial_i$ for any $i,j$ with $|i-j|>1$,
     	\item \textbf{Braid Relation:} $\partial_i\partial_{i+1}\partial_i=\partial_{i+1}\partial_i\partial_{i+1}$ for any $i$.
     \end{itemize}
     Since these (along with $s_i^2=1$) generate all relations satisfied by the reflections $s_i$ (see chapter 3 of \cite{BjornerBrenti}), the construction in Definition \ref{def:Schubert} is independent of the choice of minimal factorization.  Note also that $\partial_i^2=0$, so the requirement of minimal factorizations is necessary in the definition.
    \end{remark}
    
    The Schubert polynomials' image in $R_n$ not only form a basis of these cohomology rings, but the polynomials themselves form a basis of all polynomials in the following sense.  The Schubert polynomials $\Sch_w$ are well-defined for permutations $w\in S_\infty=\bigcup S_m$ for which $w(i)>w(i+1)$ for all $i\ge k$ for some $k$.  For a fixed such $k$, these Schubert polynomials form a basis for $\ZZ[x_1,\ldots,x_k]$.
   
    One special case of the analog of the Pieri rule for Schubert polynomials is known as \textbf{Monk's rule}.
    
    \begin{theorem}[Monk's rule]  
    	We have $$\Sch_{s_i}\cdot \Sch_{w}=\sum \Sch_v$$ where the sum ranges over all permutations $v$ obtained from $w$ by:
    	\begin{itemize}
    		\item Choosing a pair $p,q$ of indices with $p\le i<q$ for which $w(p)<w(q)$ and for any $k$ between $p$ and $q$, $w(k)$ is not between $w(p)$ and $w(q)$,
    		\item Defining $v(p)=w(q)$, $v(q)=w(p)$ and for all other $k$, $v(k)=w(k)$.
    	\end{itemize}
    	Equivalently, the sum is over all $v=w\cdot t$ where $t$ is a transposition $(pq)$ with $p\le i<q$ for which $l(v)=l(w)+1$. 
    \end{theorem}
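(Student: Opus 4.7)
The plan is to combine an explicit computation of $\Sch_{s_i}$ with the divided-difference recursion in Definition~\ref{def:Schubert}, and then induct downward from $w_0$.

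First, I would establish the explicit formula $\Sch_{s_i} = x_1 + x_2 + \cdots + x_i$. This is obtained by applying a suitable reduced sequence of divided differences for $w_0 \cdot s_i$ to $\Sch_{w_0} = x_1^{n-1} x_2^{n-2} \cdots x_{n-1}$, using the basic identity $\partial_j(x_j^{a}) = x_j^{a-1} + x_j^{a-2} x_{j+1} + \cdots + x_{j+1}^{a-1}$ together with the fact that $\partial_j$ commutes with any polynomial symmetric in $x_j, x_{j+1}$. With this formula, Monk's rule becomes the concrete claim
$$(x_1 + x_2 + \cdots + x_i)\cdot \Sch_w = \sum_{t}\Sch_{wt},$$
where $t=(pq)$ runs over transpositions with $p\le i<q$ and $\ell(wt)=\ell(w)+1$.

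Next I would record two ingredients needed for the induction: the twisted Leibniz rule $\partial_j(fg)=\partial_j(f)\,g+s_j(f)\,\partial_j(g)$, which is immediate from the definition; and the standard fact (a consequence of the braid and commutation relations in Remark~\ref{rem:braid}, together with $\partial_j^2=0$) that $\partial_j\Sch_v$ equals $\Sch_{vs_j}$ when $\ell(vs_j)<\ell(v)$ and vanishes otherwise. The induction is on $\ell(w_0)-\ell(w)$. The base case $w=w_0$ holds because the right side is empty (no $v$ has $\ell(v)>\ell(w_0)$) and the left side sits in a degree above the top of $R_n$, hence vanishes.

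For the inductive step, assume Monk's rule for $w$ and pick $j$ with $\ell(ws_j)=\ell(w)-1$, so $\Sch_{ws_j}=\partial_j\Sch_w$. Applying $\partial_j$ to both sides of Monk's rule for $w$ and invoking the Leibniz rule with $f=x_1+\cdots+x_i$, the left side yields $\Sch_{s_i}\cdot\Sch_{ws_j}$ plus a correction $\partial_j(f)\cdot\Sch_w$ which vanishes unless $j=i$ (in which case it equals $\Sch_w$, and this ``extra'' term must be absorbed into the matching below). On the right side, each $\partial_j\Sch_{wt}$ is either zero or a Schubert polynomial $\Sch_{wts_j}$; reindexing by $t\mapsto s_j t s_j$ on the set of admissible transpositions, one identifies the surviving terms with Monk's rule for $ws_j$.

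The main obstacle will be the bookkeeping in this last matching. For each admissible transposition $t=(pq)$ in Monk's rule for $w$, one must determine whether $wt\cdot s_j$ has length $\ell(wt)\pm 1$, which depends delicately on the relative positions of $j,j+1,p,q$ and on the values $w(p),w(q),w(j),w(j+1)$. The boundary cases $j\in\{p-1,p,q-1,q\}$ and especially $j=i$ produce exceptional contributions, and reconciling them with the correction term $\partial_j(x_1+\cdots+x_i)\cdot\Sch_w$ is the combinatorial heart of the proof, essentially an exchange-condition argument in $S_n$. This is the step where the standard references (e.g.\ Macdonald's \emph{Notes on Schubert Polynomials}) devote substantial casework, and I would follow that template.
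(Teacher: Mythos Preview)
The paper does not prove Monk's rule; it is stated without proof, and the discussion immediately moves on to the open problem of a Littlewood--Richardson-type generalization. So there is no argument in the paper to compare against.

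Your outline is the standard algebraic proof (essentially Macdonald's): identify $\Sch_{s_i}=x_1+\cdots+x_i$, then induct downward from $w_0$ using the twisted Leibniz rule and the identity $\partial_j\Sch_v=\Sch_{vs_j}$ or $0$. The strategy is sound, and you are right that the combinatorial matching in the inductive step---tracking which $\partial_j\Sch_{wt}$ survive and reindexing via $t\mapsto s_jts_j$, with separate treatment of the boundary cases $j\in\{p-1,p,q-1,q\}$ and $j=i$---is where the real work lies.

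One point deserves care. Your base case argues that $(x_1+\cdots+x_i)\Sch_{w_0}$ vanishes because its degree exceeds the top of $R_n$. That establishes Monk's rule only in the coinvariant ring $R_n$, not as a polynomial identity in $\ZZ[x_1,\ldots,x_n]$. In the polynomial ring the left side is certainly nonzero, and correspondingly the right side is \emph{not} empty once one works in $S_\infty$: the transpositions $t=(p,n+1)$ for $1\le p\le i$ all satisfy $\ell(w_0t)=\ell(w_0)+1$. If you want the polynomial identity (as the paper's remark that the $\Sch_w$ form a basis of $\ZZ[x_1,\ldots,x_k]$ suggests), you should either supply a stability argument---the identity in $R_N$ for all large $N$ forces the polynomial identity, since both sides stabilize as polynomials---or verify the base case directly in $S_\infty$, which is itself a nontrivial computation.
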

    
    Interestingly, there is not a known ``Littlewood-Richardson rule'' that generalizes Monk's rule, and this is an important open problem in Schubert calculus. 
    
    \begin{problem}
    	Find a combinatorial interpretation analogous to the Littlewood-Richardson rule for the positive integer coefficients $c^{w}_{u,v}$ in the expansion $$\Sch_u\cdot \Sch_v=\sum c^{w}_{u,v}\Sch_w,$$ and therefore for computing the intersection of Schubert varieties in $\Fl_n$.
    \end{problem}
    
    Similar open problems exist for other \textit{partial flag varieties}, defined in the next sections.

    \subsection{Two Alternative Definitions}
    
    There are two other ways of defining the flag manifold that are somewhat less explicit but more generalizable.  The group $\GL_n=\GL_n(\mathbb{C})$ acts on the set of flags by left multiplication on its ordered basis.  Under this action, the stabilizer of the standard flag $F_\bullet$ is the subgroup $B$ consisting of all invertible upper-triangular matrices.  Notice that $\GL_n$ acts transitively on flags via change-of-basis matrices, and so the stabilizer of any arbitrary flag is simply a conjugation $gBg^{-1}$ of $B$.  We can therefore define the flag variety as the set of cosets in the quotient $\GL_n/B$, and define its variety structure accordingly.  
    
    Alternatively, we can associate to each coset $gB$ in $\GL_n/B$ the subgroup $gBg^{-1}$.  Since $B$ is its own \textit{normalizer} in $G$ ($gBg^{-1}=B$ iff $g\in B$), the cosets in $\GL_n/B$ are in one-to-one correspondence with subgroups conjugate to $B$.  We can therefore define the flag variety as the set $\mathcal{B}$ of all subgroups conjugate to $B$.
    
   \subsection{Generalized flag varieties}
   
   The notion of a ``flag variety'' can be extended in an algebraic way starting from the definition as $\GL_n/B$, to quotients of other matrix groups $G$ by certain subgroups $B$ called \textbf{Borel subgroups}.   The subgroup $B$ of invertible upper-triangular matrices is an example of a Borel subgroup of $\GL_n$, that is, a \textbf{maximal connected solvable subgroup}.   It is \textit{connected} because it is the product of the torus $(\mathbb{C}^\ast)^n$ and $\binom{n}{2}$ copies of $\mathbb{C}$.  We can also show that it is \textit{solvable}, meaning that its derived series of commutators 
   \begin{eqnarray*}
   B_0 & := & B, \\ 
   B_1 & := & [B_0,B_0], \\ 
   B_2 & := &[B_1,B_1], \\ 
   & \vdots &
   \end{eqnarray*} 
   terminates.  Indeed, $[B,B]$ is the set of all matrices of the form $bcb^{-1}c^{-1}$ for $b$ and $c$ in $B$.  Writing $b=(d_1+n_1)$ and $c=(d_1+n_2)$ where $d_1$ and $d_2$ are diagonal matrices and $n_1$ and $n_2$ strictly upper-triangular, it is not hard to show that $bcb^{-1}c^{-1}$ has all $1$'s on the diagonal.  By a similar argument, one can show that the elements of $B_2$ have $1$'s on the diagonal and $0$'s on the off-diagonal, and $B_3$ has two off-diagonal rows of $0$'s, and so on.  Thus the derived series is eventually the trivial group.
   
   In fact, a well-known theorem of Lie and Kolchin \cite{Kolchin} states that \textit{all} solvable subgroups of $\GL_n$ consist of upper triangular matrices in some basis.  This implies that $B$ is maximal as well among solvable subgroups.  Therefore $B$ is a Borel subgroup.
   
   The Lie-Kolchin theorem also implies that all the Borel subgroups in $\GL_n$ are of the form $gBg^{-1}$ (and all such groups are Borel subgroups).  That is, all Borel subgroups are conjugate.   It turns out that this is true for any \textbf{semisimple linear algebraic group} $G$, that is, a matrix group defined by polynomial equations in the matrix entries, such that $G$ has no nontrivial smooth connected solvable normal subgroups.  
   
   Additionally, any Borel subgroup in a semisimple linear algebraic group $G$ is its own normalizer.  By an argument identical to that in the previous section, it follows that the groups $G/B$ are independent of the choice of Borel subgroup $B$ (up to isomorphism) and are also isomorphic to the set $\mathcal{B}$ of all Borel subgroups of $G$ as well.  Therefore we can think of $\mathcal{B}$ as an algebraic variety by inheriting the structure from $G/B$ for any Borel subgroup $B$.
   
   Finally, we can define a generalized flag variety as follows. 
   
   \begin{definition}
   	The \textbf{flag variety} of a semisimple linear algebraic group $G$ to be $G/B$ where $B$ is a Borel subgroup.
   \end{definition}
   
   Some classical examples of such linear algebraic groups are the special linear group $SL_n$, the special orthogonal group $SO_n$ of orthogonal $n\times n$ matrices, and the symplectic group $SP_{2n}$ of symplectic matrices.  We will explore a related quotient of the special orthogonal group $SO_{2n+1}$ in Section \ref{sec:variation3}.
   
     We now define partial flag varieties, another generalization of the complete flag variety.  Recall that a \textbf{partial flag} is a sequence $F_{i_1}\subset \cdots \subset F_{i_r}$ of subspaces of $\CC^n$ with $\dim(F_{i_j})=i_j$ for all $j$.  Notice that a $k$-dimensional subspace of $\CC^n$ can be thought of as a partial flag consisting of a single subspace $F_k$.
     
     It is not hard to show that all \textbf{partial flag varieties}, the varieties of partial flags of certain degrees, can be defined as a quotient $G/P$ for a \textbf{parabolic subgroup} $P$, namely a closed intermediate subgroup $B\subset P\subset G$.  The Grassmannian $\mathrm{Gr}(n,k)$, then, can be thought of as the quotient of $\mathrm{GL}_n$ by the parabolic subgroup $S=\mathrm{Stab}(V)$ where $V$ is any fixed $k$-dimensional subspace of $\mathbb{C}^n$.   Similarly, we can start with a different algebraic group, say the special orthogonal group $\mathrm{SO}_{2n+1}$, and quotient by parabolic subgroups to get partial flag varieties of other types.  
     
   \subsection{Problems}
   
   \begin{enumerate}
   	\item \textbf{Reflection length equals inversion number:} Show that $l(w)=\inv(w)$ for any $w\in S_n$.
   	
   	\item \textbf{Practice makes perfect:}  Write out all the Schubert polynomials for permutations in $S_3$ and $S_4$.
   	
   	\item \textbf{Braid relations:}  Verify that the operators $\partial_i$ satisfy the braid relations as stated in Remark \ref{rem:braid}.
   	
   	\item \textbf{The product rule for Schubert calculus:}  Prove that $\partial_i(P\cdot Q)=\partial_i(P)\cdot Q+s_i(P)\cdot \partial_i(Q)$ for any two polynomials $P$ and $Q$. 
   	
   	\item \textbf{Divided difference acts on $R_n$:} Use the previous problem to show that the operator $\partial_i$ maps the ideal generated by elementary symmetric polynomials to itself, and hence the operator descends to a map on the quotient $R_n$.
   	
   	\item \textbf{Schubert polynomials as a basis:}  Prove that if $w\in S_\infty$ satisfies $w(i)>w(i+1)$ for all $i\ge k$ then $\Sch_w\in \ZZ[x_1,\ldots,x_k]$.  Show that they form a basis of the polynomial ring as well. 
   \end{enumerate}
  
\pagebreak
  
\section{Variation 3: The orthogonal Grassmannian}\label{sec:variation3}

In the previous section, we saw that we can interpret the Grassmannian as a partial flag variety.  We can generalize this construction to other matrix groups $G$, hence defining Grassmannians in other Lie types.  We will explore one of these Grassmannians as our final variation.

  \begin{definition}
  	The \textbf{orthogonal Grassmannian} $\mathrm{OG}(2n+1,k)$ is the quotient $\mathrm{SO}_{2n+1}/P$ where $P$ is the stabilizer of a fixed \textbf{isotropic} $k$-dimensional subspace $V$.  The term \textit{isotropic} means that $V$ satisfies $\langle v,w\rangle=0$ for all $v,w\in V$ with respect to a chosen symmetric bilinear form $\langle,\rangle$.
  \end{definition}

  The isotropic condition, at first glance, seems very unnatural.  After all, how could a nonzero subspace possibly be orthogonal to itself?  Well, it is first important to note that we are working over $\mathbb{C}$, not $\mathbb{R}$, and the bilinear form is symmetric, not conjugate-symmetric.  In particular, suppose we define the bilinear form to be the usual dot product $$\langle (a_1,\ldots,a_{2n+1}),(b_1,\ldots,b_{2n+1})\rangle=a_1b_1+a_2b_2+\cdots+a_{2n+1}b_{2n+1}$$ in $\mathbb{C}^{2n+1}$.  Then in $\CC^3$, the vector $(3,5i,4)$ is orthogonal to itself: $3\cdot 3+5i\cdot 5i+4\cdot 4=0$.

  While the choice of symmetric bilinear form does not change the fundamental geometry of the orthogonal Grassmannian, one choice in particular makes things easier to work with in practice: the ``reverse dot product'' given by 
  $$\langle (a_1,\ldots,a_{2n+1}),(b_1,\ldots,b_{2n+1})\rangle=\sum_{i=1}^{2n+1} a_ib_{2n+1-i}.$$  In particular, with respect to this symmetric form, the standard complete flag $F_\bullet$ is an \textbf{orthogonal flag}, with $F_i^\perp=F_{2n+1-i}$ for all $i$.  Orthogonal flags are precisely the type of flags that are used to define Schubert varieties in the orthogonal Grassmannian.

  Note that isotropic subspaces are sent to other isotropic subspaces under the action of the orthorgonal group: if $\langle v,w\rangle=0$ then $\langle Av,Aw\rangle=\langle v,w\rangle=0$ for any $A\in \mathrm{SO}_{2n+1}$.  Thus the orthogonal Grassmannian $\mathrm{OG}(2n+1,k)$, which is the quotient $\mathrm{SO}_{2n+1}/\mathrm{Stab}(V)$, can be interpreted as the variety of all $k$-dimensional isotropic subspaces of $\mathbb{C}^{2n+1}$.

  \subsection{Schubert varieties and row reduction in $\mathrm{OG}(2n+1,n)$}

  Just as in the ordinary Grassmannian, there is a Schubert cell decomposition for the orthogonal Grassmannian.  The combinatorics of Schubert varieties is particularly nice in the case of $\mathrm{OG}(2n+1,n)$ in which the orthogonal subspaces are ``half dimension'' $n$.  (See the introduction of \cite{ThomasYong} or the book \cite{Green} for more details.)

  In $\Gr(2n+1,n)$, the Schubert varieties are indexed by partitions $\lambda$ whose Young diagram fit inside the $n\times (n+1)$ ambient rectangle.  Suppose we divide this rectangle into two staircases as shown below using the blue cut, and only consider the partitions $\lambda$ that are symmetric with respect to the reflective map taking the upper staircase to the lower.
  
  \begin{center}
  \includegraphics{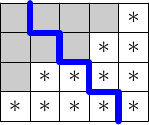}
  \end{center}
  
    We claim that the Schubert varieties of the orthogonal Grassmannian are indexed by the \textbf{shifted partitions} formed by ignoring the lower half of these symmetric partition diagrams.  We define the \textbf{ambient triangle} to be the half of the ambient rectangle above the staircase cut.
    
    \begin{definition}
    	A \textbf{shifted partition} is a strictly-decreasing sequence of positive integers, $\lambda=(\lambda_1 > \ldots > \lambda_k)$.  We write $|\lambda|=\sum \lambda_i$.  The \textbf{shifted Young diagram} of $\lambda$ is the partial grid in which the $i$-th row contains $\lambda_i$ boxes and is shifted to the right $i$ steps.  Below is the shifted Young diagram of the shifted partition $(3,1)$, drawn inside the ambient triangle from the example above.
    	
    	\begin{center}
    		\includegraphics{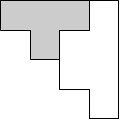}
    	\end{center}
    \end{definition} 
   
    \begin{definition}
    	Let $F_\bullet$ be an orthogonal flag in $\CC^{2n+1}$, and let $\lambda$ be a shifted partition.  Then the \textbf{Schubert variety} $X_\lambda(F_\bullet)$ is defined by
    	$$X_\lambda(F_\bullet)=\{W \in \mathrm{OG}(2n+1,n): \mathrm{dim}(W\cap F_{n+1+i-\overline{\lambda}_i})\ge i\text{ for } i=1,\ldots,n\}$$ where $\overline{\lambda}$ is the ``doubled partition'' formed by reflecting the shifted partition about the staircase.   
    \end{definition}  
    
    In other words, the Schubert varieties consist of the isotropic elements of the ordinary Schubert varieties, giving a natural embedding $\mathrm{OG}(2n+1,n)\to \mathrm{Gr}(2n+1,n)$ that respects the Schubert decompositions: $$X_{\lambda}(F_\bullet) =\Omega_{\overline{\lambda}}(F_\bullet)\cap \mathrm{OG}(2n+1,n).$$

    To get a sense of how this works, consider the example of $\lambda=(3,1)$ and $\overline{\lambda}=(4,3,1)$ shown above, in the case $n=4$.  The Schubert cell $\Omega_{\overline{\lambda}}^{\circ}$ in $\Gr(9,4)$ looks like

\begin{center}
 \includegraphics{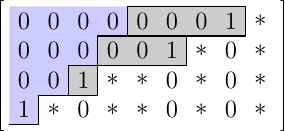}
\end{center} 

    Now, which of these spaces are isotropic?   Suppose we label the starred entries as shown, omitting the $0$ entries:

\begin{center}
\includegraphics{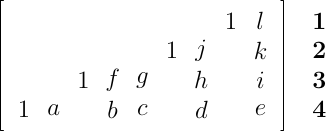}
\end{center}

    We will show that the entries $l,j,k,h,i,e$ are all uniquely determined by the values of the remaining variables $a,b,c,d,f,g$.  Thus there is one isotropic subspace in this cell for each choice of values $a,b,c,d,f,g$, corresponding to the ``lower half'' of the partition diagram we started with, namely
\begin{center}
\includegraphics{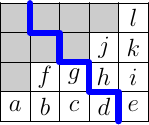}.
\end{center}
To see this, let the rows of the matrix be labeled $\mathbf{1},\mathbf{2},\mathbf{3},\mathbf{4}$ from top to bottom as shown, and suppose its row span is isotropic.  Since row $\mathbf{1}$ and $\mathbf{4}$ are orthogonal with respect to the reverse dot product, we get the relation $$l+a=0,$$ which expresses $l=-a$ in terms of $a$.

Rows $\mathbf{2}$ and $\mathbf{4}$ are also orthogonal, which means that $$b+k=0,$$ so we can similarly eliminate $k$.  From rows $\mathbf{2}$ and $\mathbf{3}$, we obtain $f+j=0$, which expresses $j$ in terms of the lower variables.  We then pair row $\mathbf{3}$ with itself to see that $h+g^2=0$, eliminating $h$, and finally pairing $\mathbf{3}$ with $\mathbf{4}$ we have $i+gc+d=0$, so $i$ is now expressed in terms of lower variables as well.

Moreover, these are the only relations we get from the isotropic condition - any other pairings of rows give the trivial relation $0=0$.  So in this case the Schubert variety restricted to the orthogonal Grassmannian has half the dimension of the original, generated by the possible values for $a,b,c,d,f,g$.

\subsection{General elimination argument}

Why does the elimination process work for any symmetric shape $\lambda$?  Label the steps of the boundary path of $\lambda$ by $1,2,3,\ldots$ from SW to NE in the lower left half, and label them from NE to SW in the upper right half, as shown:
\begin{center}
\includegraphics{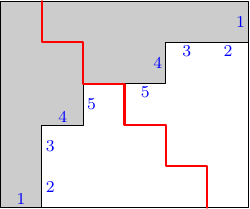}
\end{center}
Then the labels on the vertical steps in the lower left half give the column indices of the $1$'s in the corresponding rows of the matrix.  The labels on the horizontal steps in the upper half, which match these labels by symmetry, give the column indices \textit{from the right} of the corresponding starred columns from right to left.
\begin{center}
\includegraphics{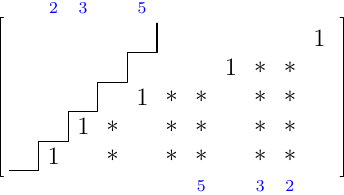}
\end{center}
This means that the $1$'s in the lower left of the matrix correspond to the opposite columns of those containing letters in the upper right half.  It follows that we can use the orthogonality relations to pair a $1$ (which is leftmost in its row) with a column entry in a higher or equal row so as to express that entry in terms of other letters to its lower left.   The $1$ is in a lower or equal row in these pairings precisely for the entries whose corresponding square lies above the staircase cut.  Thus we can always express the upper right variables in terms of the lower left, as in our example above.

\subsection{Shifted tableaux and a Littlewood-Richardson rule}

The beauty of shifted partitions is that so much of the original tableaux combinatorics that goes into ordinary Schubert calculus works almost the same way for shifted tableaux and the orthogonal Grassmannian.  We define these notions rigorously below.

\begin{definition}
	A \textbf{shifted semistandard Young tableau} is a filling of the boxes of a shifted skew shape with entries from the alphabet $\{1'<1<2'<2<3'<3<\cdots \}$ such that the entries are weakly increasing down columns and across rows, and such that primed entries can only repeat in columns, and unprimed only in rows.  
	
	The \textbf{reading word} of such a tableau is the word formed by concatenating the rows from bottom to top.  The \textbf{content} of $T$ is the vector $\mathrm{content}(T) = (n_1, n_2, \ldots)$, where $n_i$ is the total number of $(i)$s and $(i')$s in $T$.  See Figure \ref{fig:shifted} for an example.
\end{definition}  

\begin{figure}
\begin{center}
	\includegraphics{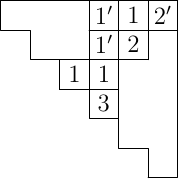}
	\end{center}
	
	\caption{\label{fig:shifted}The tableau above is a shifted semistandard tableau of shape $\lambda/\mu$ where $\lambda=(6,4,2,1)$ and $\mu=(3,2)$, and content $(5,2,1)$.  Its reading word is $3111'21'12'$. } 
	
\end{figure}

In this setting, there are actually two analogs of ``Schur functions'' that arise from these semistandard tableaux.  They are known as the Schur $P$-functions and Schur $Q$-functions.

\begin{definition}
	Let $\lambda/\mu$ be a shifted skew shape.  Define $\mathrm{ShST}_Q(\lambda/\mu)$ to be the set of all shifted semistandard tableaux of shape $\lambda/\mu$.  Define $\mathrm{ShST}_P(\lambda/\mu)$ to be the set of those tableaux in which primes are not allowed on the staircase diagonal.
\end{definition}

\begin{definition}\label{def:SchurQ}
	The \textbf{Schur $Q$-function} $Q_{\lambda/\mu}$ is defined as
	\[Q_{\lambda/\mu}(x_1,x_2,\ldots)=\sum_{T\in \mathrm{ShST}_Q(\lambda/\mu)} x^{\mathrm{wt}(T)}\]
	and the \textbf{Schur $P$-function} $P_{\lambda/\mu}$ is defined as 
	\[P_{\lambda/\mu}(x_1,x_2,\ldots)=\sum_{T\in \mathrm{ShST}_P(\lambda/\mu)} x^{\mathrm{wt}(T)}.\]
\end{definition}

The Schur $Q$-functions, like ordinary Schur functions, are symmetric functions with unique leading terms, spanning a proper subspace of $\Lambda$.  In addition, they have positive product expansions
$$Q_\mu Q_\nu=\sum 2^{\ell(\mu)+\ell(\nu)-\ell(\lambda)}f^{\lambda}_{\mu\nu}Q_\lambda$$ for certain positive integers $f^\lambda_{\mu\nu}$.  It is easy to see that this is equivalent to the rule $$P_\mu P_\nu=\sum f^{\lambda}_{\mu\nu}P_\lambda.$$  Here the coefficients $f^{\lambda}_{\mu\nu}$ are precisely the structure coefficients for the cohomology ring of the orthogonal Grassmannian.  In particular, if we extend them to generalized coefficients by $$P_{\mu^{(1)}}\cdot \cdots \cdot P_{\mu^{(r)}}=\sum f^{\lambda}_{\mu^{(1)}\cdots \mu^{(r)}} P_\lambda,$$ we have the following theorem due to Pragacz \cite{Pragacz}.

\begin{theorem}
	A zero-dimensional intersection $X_{\mu^{(1)}}\cap \cdots \cap X_{\mu^{(r)}}$ has exactly $f^{T}_{\mu^{(1)}\cdots \mu^{(r)}}$ points, where $T$ is the ambient triangle.
\end{theorem}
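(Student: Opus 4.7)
The proof plan mirrors the strategy used for the ordinary Grassmannian, but with shifted partitions, the ambient triangle $T$, and Schur $P$-functions replacing their straight-shape counterparts. The overall idea is to pass from geometry to cohomology, and from cohomology to a symmetric function calculation.

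First, I would set up the cohomology ring $H^\ast(\mathrm{OG}(2n+1,n))$ using the Schubert cell decomposition sketched in the preceding subsections. The Schubert cells $X_\lambda^\circ$ indexed by shifted partitions $\lambda\subseteq T$ give a CW structure with only even-dimensional cells (each cell is a complex affine space, by the elimination argument that expresses the upper-right variables in terms of the lower-left ones). Consequently the cellular cohomology is free abelian on the classes $\tau_\lambda:=[X_\lambda(F_\bullet)]$, these classes are independent of the chosen orthogonal flag $F_\bullet$ (any two are related by an element of $\mathrm{SO}_{2n+1}$, and the orbit gives a rational equivalence), and $H^\ast(\mathrm{OG}(2n+1,n))$ is a graded ring with a single top class $\tau_T$ represented by a single point.

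Next, I would invoke the Pragacz-type identification of this cohomology ring with the Schur $P$-function ring. Concretely, one shows there is a graded ring isomorphism
\[
H^\ast(\mathrm{OG}(2n+1,n))\;\cong\;\mathbb{Z}\langle P_\lambda:\lambda\subseteq T\rangle,\qquad \tau_\lambda\longmapsto P_\lambda,
\]
obtained by verifying an analog of the Pieri rule: multiplication by $\tau_{(k)}$ on Schubert classes matches multiplication by $P_{(k)}$ on Schur $P$-functions (the shifted Pieri rule). Since the $P_{(k)}$ generate the $P$-ring algebraically, and the $\tau_{(k)}$ generate the cohomology, matching these products on generators upgrades the additive isomorphism to a ring isomorphism. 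Under this isomorphism, the structure constants $f^{\lambda}_{\mu\nu}$ of Definition \ref{def:SchurQ} are exactly the cohomological structure constants: $\tau_\mu\cdot\tau_\nu=\sum f^{\lambda}_{\mu\nu}\tau_\lambda$ inside $H^\ast$ (with the sum restricted to $\lambda\subseteq T$).

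Now the zero-dimensional case follows by bookkeeping of degrees. If $\sum_i |\mu^{(i)}|=|T|=\binom{n+1}{2}$, then $\tau_{\mu^{(1)}}\cdots\tau_{\mu^{(r)}}$ lies in the top graded piece, which is $\mathbb{Z}\cdot\tau_T$, so by iterating the two-term product rule,
\[
\tau_{\mu^{(1)}}\cdots\tau_{\mu^{(r)}}\;=\;f^{T}_{\mu^{(1)},\ldots,\mu^{(r)}}\,\tau_T.
\]
Finally, to translate this cohomological identity into an honest point count, I would appeal to a Kleiman-Bertini transversality argument: for a sufficiently generic choice of orthogonal flags $F^{(1)}_\bullet,\ldots,F^{(r)}_\bullet$ (obtained by translating the standard flag by generic elements of $\mathrm{SO}_{2n+1}$), the Schubert varieties $X_{\mu^{(i)}}(F^{(i)}_\bullet)$ meet transversely in a finite set of reduced points, whose count equals the coefficient of $\tau_T$ in the cohomological product, namely $f^{T}_{\mu^{(1)},\ldots,\mu^{(r)}}$.

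The main obstacle is the shifted Pieri rule underlying the ring isomorphism: unlike in the type $A$ case, the doubling map $\lambda\mapsto\overline{\lambda}$ and the half-integer powers of $2$ that appear in $Q_\mu Q_\nu=\sum 2^{\ell(\mu)+\ell(\nu)-\ell(\lambda)}f^\lambda_{\mu\nu}Q_\lambda$ require care to see why $P$-functions (not $Q$-functions) are the ones matching Schubert classes in $\mathrm{OG}(2n+1,n)$. I would verify this by direct computation in low rank and then cite the full argument from \cite{Pragacz}; the transversality step, while standard over $\mathbb{C}$ for the homogeneous space $\mathrm{SO}_{2n+1}/P$, also needs the observation that generic translates of orthogonal flags remain orthogonal, which is automatic since $\mathrm{SO}_{2n+1}$ preserves the bilinear form.
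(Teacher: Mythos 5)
Your outline is correct and follows the same route the paper takes implicitly: the paper gives no proof of this theorem, asserting only that the $f^{\lambda}_{\mu\nu}$ are the structure constants of $H^\ast(\mathrm{OG}(2n+1,n))$ and citing \cite{Pragacz}, and your sketch (even-dimensional cell decomposition giving a free basis of Schubert classes, identification with the Schur $P$-function ring via a shifted Pieri rule, top-degree bookkeeping, and Kleiman transversality) is precisely the standard argument behind that citation. The step you rightly flag as the crux --- verifying the Pieri-type rule and explaining why the $P$-functions rather than the $Q$-functions match the Schubert classes --- is exactly the part the paper leaves entirely to \cite{Pragacz}, so your proposal is a faithful reconstruction rather than a divergent proof.
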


Stembridge \cite{Stembridge} first found a Littlewood-Richardson-type rule to enumerate these coefficients.  The rule is as follows.

\begin{definition}
	Let $T$ be a semistandard shifted skew tableau with the first $i$ or $i'$ in reading order unprimed, and with reading word $w=w_1\cdots w_n$.  Let $m_i(j)$ be the multiplicity of $i$ among $w_{n-j+1},\ldots,w_n$ (the last $j$ entries) for any $i$ and for any $j\le n$.  Also let $p_{i}(j)$ be the multiplicity of $i'$ among $w_{1},\ldots,w_j$.  Then $T$ is \textbf{Littlewood-Richardson} if and only if
	\begin{itemize}
		\item Whenever $m_i(j)=m_{i+1}(j)$ we have $w_{n-j}\neq i+1,(i+1)'$, and
		\item Whenever $m_i(n)+p_{i}(j)=m_{i+1}(n)+p_{i}(j)$ we have $w_{j+1}\neq i,(i+1)'$.
	\end{itemize}
\end{definition}

Notice that this definition implies that $m_i(j)\ge m_{i+1}(j)$ for all $i$ and $j$, which is similar to the usual Littlewood-Richardson definition for ordinary tableaux.  An alternative rule that only requires reading through the word once (rather than once in each direction, as in the definition of $m_i$ above) is given in \cite{GLP}.

   \subsection{Problems}

    \begin{enumerate}
     	\item\label{converse} Show that, if $\lambda$ is a partition that is \textit{not} symmetric about the staircase cut, the intersection $\Omega_{\lambda}^\circ(F_\bullet)\cap \mathrm{OG}(2n+1,n)$ is empty.
     	\item How many isotropic $3$-planes in $\CC^7$ intersect six given $3$-planes each in at least dimension $1$?
    \end{enumerate}

\pagebreak

\section{Conclusion and further variations}\label{sec:conclusion}

  In this exposition, we have only explored the basics of the cohomology of the Grassmannian, the complete flag variety, and the orthogonal Grassmannian.  There are many other natural directions one might explore from here.  

  First and foremost, we recommend that interested readers next turn to Fulton's book entitled Young Tableaux \cite{Fulton} for more details on the combinatorial aspects of Schubert calculus and symmetric functions, including connections with representation theory.  Other books that are a natural next step from this exposition are those of Manivel \cite{Manivel}, Kumar on Kac-Moody groups and their flag varieties \cite{Kumar}, and Billey-Lakshmibai on smoothness and singular loci of Schubert varieties \cite{BilleyLakshmibai}.

  In some more specialized directions, the flag varieties and Grassmannians in other Lie types (as briefly defined in Section \ref{sec:variation3}) have been studied extensively.  The combinatorics of general Schubert polynomials for other Lie types was developed by Billey and Haiman in \cite{BilleyHaiman} and also by Fomin and Kirillov in type B \cite{FominKirillov}.   Combinatorial methods for minuscule and cominuscule types is presented in \cite{ThomasYong}.

  It is also natural to investigate partial flag varieties between the Grassmannian and $\Fl_n$.  Buch, Kresch, Purbhoo, and Tamvakis established a Littlewood-Richardson rule in the special case of \textit{two-step} flag varieties (consisting of the partial flags having just two subspaces) in \cite{BuchOthers}, and the three-step case was very recently solved by Knutson and Zinn-Justin \cite{KnutsonZinnJustin}.   Coskun provided a potential alternative approach in terms of \textit{Mondrian tableaux}, with a full preliminary answer for partial flag varieties in \cite{Coskun2}, and for the two-row case in \cite{Coskun}. 

  Other variants of cohomology, such as \textit{equivariant cohomology} and \textit{$K$-theory}, have been extensively explored for the Grassmannian and the flag variety as well.  An excellent introduction to equivariant cohomology can be found in \cite{Anderson}, and \cite{Buch} is a foundational paper on the $K$-theory of Grassmannians.  The $K$-theoritic analog of Schubert polynomials are called \textit{Grothendieck polynomials}, first defined by Lascoux and Schutzenberger \cite{LascouxSchutzenberger}.

  Another cohomological variant is \textit{quantum cohomology}, originally arising in string theory and put on mathematical foundations in the 1990's (see \cite{RuanTian}, \cite{KontsevichManin}).  Fomin, Gelfand, and Postnikov \cite{FominGelfandPostnikov} studied a quantum analog of Schubert polynomials and their combinatorics.  Chen studied quantum cohomology on flag manifolds in \cite{Chen}, and the case of equivariant quantum cohomology has been more recently explored by Anderson and Chen in \cite{AndersonChen} and Bertiger, Mili\'{c}evi\'{c}, and Taipale in \cite{BMT}.  In \cite{PechenikYong1} and \cite{PechenikYong2}, Pechenik and Yong prove a conjecture of Knutson and Vakil that gives a rule for equivariant $K$-theory of Grassmannians.  The list goes on; there are many cohomology theories (in fact, infinitely many, in some sense) all of which give slightly different insight into the workings of Grassmannians and flag varieties.

  It is worth noting that Young tableaux are not the only combinatorial objects that can be used to describe these cohomology theories.   Knutson, Tao, and Woodward developed the theory of \textit{puzzles} in \cite{KnutsonTaoWoodward}, another such combinatorial object which often arises in the generalizations listed above.  

  On the geometric side, Vakil \cite{Vakil} discovered a ``geometric Littlewood-Richardson Rule'' that describes an explicit way to degenerate an intersection of Schubert varieties into a union of other Schubert varieties (not just at the level of cohomology).  This, in some sense, more explicitly answers the intersection problems described in Section \ref{sec:intro}.

  Another natural geometric question is the smoothness and singularities of Schubert varieties.  Besides the book by Billey and Lakshmibai mentioned above \cite{BilleyLakshmibai}, this has been studied for the full flag variety by Lakshmibai and Sandya \cite{LakshmibaiSandhya}, in which they found a pattern avoidance criterion on permutations $w$ for which the Schubert variety $X_w$ is smooth.  Related results on smoothness in partial flag varieties and other variants have been studied by Gasharov and Reiner \cite{GasharovReiner}, Ryan \cite{Ryan}, and Wolper \cite{Wolper}.  Abe and Billey \cite{BilleyAbe} summarized much of this work with a number of results on pattern avoidance in Schubert varieties.  

  Real Schubert calculus (involving intersection problems in real $n$-dimensional space $\mathbb{R}^n$) is somewhat more complicated than the complex setting, but there are still many nice results in this area.  For instance, a theorem of Mukhin, Tarasov, and Varchenko in \cite{MTV} states that for a choice of flags that are each maximally tangent at some real point on the rational normal curve, the intersections of the corresponding \textit{complex} Schubert varieties have all real solutions.  An excellent recent overview of this area was written by Sottile in \cite{Sottile}.

  Relatedly, one can study the \textit{positive} real points of the Grassmannian, that is, the subset of the Grassmannian whose Pl\"{u}cker coordinates have all positive (or nonnegative) real values.  Perhaps one of the most exciting recent developments is the connection with scattering amplitudes in quantum physics, leading to the notion of an \textit{amplituhedron} coming from a positive Grassmannian.   An accessible introduction to the main ideas can be found in \cite{BourjailyThomas}, and for more in-depth study, the book \cite{Amplituhedron} by Arkani-Hamed et.\ al.   In \cite{PostnikovSpeyerWilliams}, Postnikov, Speyer, and Williams explore much of the rich combinatorial foundations of the positive Grassmannian.

  Finally, there are also many geometric spaces that have some similarities with the theory of Grassmannians and flag varieties.  \textit{Hessenberg varieties} are a family of subvarieties of the full flag variety determined by stability conditions under a chosen linear transformations (see Tymoczko's thesis \cite{Tymoczko}, for instance).  Lee studied the combinatorics of the affine flag variety in detail in \cite{SeungJin}.  The book \textit{$k$-Schur functions and affine Schubert calculus} by Lam, Lapointe, Morse, Schilling, Shimozono, and Zabrocki \cite{k-Schur} gives an excellent overview of this area, its connections to $k$-Schur functions, and the unresolved conjectures on their combinatorics.  

\pagebreak


\begin{thebibliography}{99}

\bibitem{BilleyAbe} H.~Abe, S.~Billey, Consequences of the Lakshmibai-Sandhya Theorem: The ubiquity of permutation patterns in Schubert calculus and related geometry, preprint (\href{https://arxiv.org/abs/1403.4345}{arxiv:1403.4345}).

\bibitem{Anderson} D.~Anderson, Introduction to equivariant cohomology in algebraic geometry, Notes on lectures by W.~Fulton at IMPAGNA summer school, 2010, \href{https://arxiv.org/pdf/1112.1421.pdf}{https://arxiv.org/pdf/1112.1421.pdf}.

\bibitem{AndersonChen} D.~Anderson, L.~Chen, Equivariant quantum Schubert polynomials, \textit{Adv. Math.}, Vol.~254 (2014), pp. 300--330.

\bibitem{Amplituhedron}  N. Arkani-Hamed, J. L. Bourjaily, F. Cachazo, A. B. Goncharov, A. Postnikov, and J. Trnka, \textit{Grassmannian Geometry of Scattering Amplitudes}, Cambridge University Press, 2016.

\bibitem{Bergeron} F.~Bergeron, \textit{Algebraic Combinatorics and Coinvariant Spaces}, CRC Press, 2009.

\bibitem{BGG} I.~N.~Bernstein, I.~M.~Gelfand, S.~I.~Gelfand, Schubert cells and cohomology of the spaces $G/P$, \textit{Russian Math. Surveys} \textbf{28} (1973), No.\ 3, pp.\ 1--26.

\bibitem{BMT} A.~Bertiger, E.~Mili\'cevi\'c, K.~Taipale, Equivariant quantum cohomology of the Grassmannian via the rim hook rule, preprint (\href{https://arxiv.org/abs/1403.6218}{arxiv:1403.6218}).

\bibitem{BilleyHaiman} S.~Billey, M.~Haiman, Schubert polynomials for the classical groups, \textit{J.~Amer.~Math.~Society}, Vol.~8, No.~2 (1995).

\bibitem{BilleyLakshmibai} S.~Billey, V.~Lakshmibai, \textit{Singular Loci of Schubert Varieties}, Springer, 2000.

\bibitem{BjornerBrenti} A.~Bjorner, F.~Brenti, \textit{Combinatorics of Coxeter Groups}, Springer, 2005.

\bibitem{BourjailyThomas} J.~Bourjaily, H.~Thomas, What is... the Amplituhedron?, \textit{Notices of the AMS}, Feb.\ 2018.

\bibitem{Buch} A.~Buch, A Littlewood-Richardson rule for the $K$-theory of Grassmannians, \textit{Acta Math.}, Vol.~189, No.~1 (2002), 37--78.

\bibitem{BuchOthers} A.~Buch, A.~Kresch, K.~Purbhoo, H.~Tamvakis, The puzzle conjecture for the cohomology of two-step flag manifolds, \textit{J. Alg. Comb.}, 2016, Vol.~44, Issue 4, pp. 973--1007.

\bibitem{Chen} L.~Chen, Quantum cohomology of flag manifolds, \textit{Adv. Math.}, Vol.~174, Issue 1, Mar 2003, pp. 1--34.

\bibitem{Coskun2} I.~Coskun, A Littlewood-Richardson rule for partial flag varieties, \textit{preprint (under revision)}, \href{http://homepages.math.uic.edu/~coskun/newpartial.pdf}{http://homepages.math.uic.edu/$\sim$coskun/newpartial.pdf}

\bibitem{Coskun} I.~Coskun, A Littlewood-Richardson rule for two-step flag varieties, \textit{Inventiones Mathematicae}, May 2009.

\bibitem{CoxLittleOShea} D.~Cox, J.~Little, D.~O'Shea, \textit{Ideals, Varieties, and Algorithms}, 2nd ed., Springer, 1991.  

\bibitem{FrameRobinsonThrall} J.~S.~Frame, G.~de B.~Robinson, R.~M.~Thrall, The hook graphs of the symmetric group, \textit{Can. J. Math.} 6, pp.\ 316--325 (1954).

\bibitem{FominGelfandPostnikov} S.~Fomin, S.~Gelfand, A.~Postnikov, Quantum Schubert polynomials, \textit{J.\ Amer.\ Math.\ Soc.} 

\bibitem{FominKirillov} S.~Fomin, A.~Kirillov, Combinatorial $B_n$-analogues of Schubert polynomials, \textit{Trans. Amer. Math. Soc.}, Vol. 348, No. 9, September 1996.

\bibitem{FultonIntersectionTheory} W.~Fulton, \textit{Intersection Theory}, 2nd ed., Springer, 1998.

\bibitem{Fulton} W.~Fulton, \textit{Young tableaux, with Applications to Representation Theory and Geometry}, Cambridge University Press, 1997.

\bibitem{GarsiaProcesi} A.~M.~Garsia, C.~Procesi, On certain graded $S_n$-modules and the $q$-Kostka polynomials, \textit{Adv.\ Math.}, Vol.\ 94, Issue 1 (Jul 1992) pp.\ 82--138. 

\bibitem{GasharovReiner} V.~Gasharov, V.~Reiner, Cohomology of smooth Schubert varieties in partial flag manifolds, \textit{J.\ London Math.\ Soc.}, Vol.\ 66, Issue 3 (2002), pp.\ 550--562.

\bibitem{GLP} M.~Gillespie, J.~Levinson, K.~Purbhoo, A crystal-like structure on shifted tableaux, preprint (\href{https://arxiv.org/pdf/1706.09969.pdf}{arxiv:1706.09969}).

\bibitem{Green} R.~Green, \textit{Combinatorics of Minuscule Representations}, Cambridge Tracts in Mathematics \textbf{199}, Cambridge University Press, 2013.

\bibitem{Hatcher} A.~Hatcher, \textit{Algebraic Topology}, Cambridge University Press, 2001.

\bibitem{KleimanLaksov} S.~Kleiman, D.~Laskov, Schubert Calculus, \textit{Amer. Math. Monthly}, Vol.\ 79, No.\ 10 (Dec.\ 1972), pp.\ 1061--1082.

\bibitem{KnutsonTaoWoodward} A.~Knutson, T.~Tao, C.~Woodward, The honeycomb model of $GL_n({\mathbb C})$ tensor products II: Puzzles determine facets of the Littlewood-Richardson cone, \textit{J. Amer. Math. Soc.} \textbf{17} (2004), 19--48.

\bibitem{KnutsonZinnJustin} A.~Knutson, P.~Zinn-Justin, Schubert puzzles and integrability I: invariant trilinear forms, preprint, \href{https://arxiv.org/abs/1706.10019v4}{arxiv:1706.10019v4}, 2017.

\bibitem{Kolchin} E.~R.~Kolchin, Algebraic matric groups and the Picard-Vessiot theory of homogeneous linear ordinary differential equations, \textit{Annals of Mathematics, Second Series}, 49: 1--42. 

\bibitem{KontsevichManin} M.~Kontsevich, Yu.~Manin, Gromov-Witten classes, quantum cohomology, and enumerative geometry, \textit{Comm. Math. Phys.} \textbf{164} (1994), pp.\ 525--562.

\bibitem{Kumar} S.~Kumar, \textit{Kac-Moody Groups, their Flag Varieties and Representation Theory}, Progress in Mathematics Vol.\ 204, Birkh\"{a}user, 2002.

\bibitem{k-Schur} T.~Lam, L.~Lapointe, J.~Morse, A.~Schilling, M.~Shimozono, M.~Zabrocki, \textit{$k$-Schur functions and affine Schubert calculus}, Springer, 2014.

\bibitem{LakshmibaiSandhya} V.~Lakshmibai, B.~Sandhya, Criterion for smoothness of Schubert varieties in $SL(n)/B$, \textit{Proc.\ Indian Acad.\ Sci.\ - Math.\ Sci.}, April 1990.

\bibitem{LascouxSchutzenberger}  A. Lascoux and M.-P. Sch\"{u}tzenberger, Polyn\^{o}mes de Schubert, \textit{C. R. Acad. Sci. Paris S\'{e}r. I
Math.} 294 (1982), no. 13, 447--450.

\bibitem{SeungJin} S.~J.~Lee, Combinatorial description of the cohomology of the affine flag variety, \textit{Trans. Amer. Math. Soc.}, to appear (\href{https://arxiv.org/abs/1506.02390}{arXiv:1506.02390}).

\bibitem{Macdonald} I.~Macdonald, \textit{Symmetric Functions and Hall Polynomials}, Oxford University Press, 1979.

\bibitem{Manivel} L.~Manivel, \textit{Symmetric Functions, Schubert polynomials, and Degeneracy Loci}, Amer.\ Math.\ Soc.\ (2001).

\bibitem{MTV} E.~Mukhin, V.~Tarasov, and A.~Varchenko, Schubert calculus and representations of the general linear group, {\em J. Amer. Math. Soc.}, 22(4):909--940, 2009.

\bibitem{PechenikYong1} O.~Pechenik, A.~Yong, Equivariant K-theory of Grassmannians, \textit{Forum of Mathematics, Pi} \textbf{5}, 2017, pp.\ 1--128. 

\bibitem{PechenikYong2} O.~Pechenik, A.~Yong, Equivariant K-theory of Grassmannians II: The Knutson-Vakil conjecture, \textit{Compositio Mathematica} \textbf{153}, 2017, pp.\ 667--677. 

\bibitem{PostnikovSpeyerWilliams} A.~Postnikov, D.~Speyer, L.~Williams, Matching polytopes, toric geometry, and the totally non-negative Grassmannian, \textit{J.\ Alg.\ Comb.} \textbf{30} (2009), pp.\ 173--191.

\bibitem{Pragacz} P. Pragacz, Algebro-Geometric applications of Schur $S$- and $Q$-polynomials. \textit{Topics in Invariant Theory, M.–P. Malliavin, ed.}, Springer Lecture Notes in Mathematics 1478 (1991) pp.\ 130--191.

\bibitem{Ronga} F.~Ronga, Schubert calculus according to Schubert, preprint (\href{https://arxiv.org/abs/math/0608784}{arxiv:0608784}), 2006.

\bibitem{RuanTian} Y.~Ruan, G.~Tian, Mathematical theory of quantum cohomology, \textit{J.\ Diff.\ Geom.} \textbf{42} (1995), No.\ 2, pp.\ 259--367.

\bibitem{Ryan} K.~Ryan, On Schubert varieties in the flag manifold of $SL(n,\mathbb{C})$, \textit{Math. Ann.} 276 (1987), pp. 205--224.

\bibitem{Sagan} B.~Sagan, \textit{The Symmetric Group, 2nd ed.}, Springer, New York, 2001.

\bibitem{Schubert} H.~Schubert, \textit{Kalk\"{u}l der abz\"{a}hlende Geometrie}, Teubner Verlag, Leipzig, 1789.

\bibitem{Sottile} F.~Sottile, Frontiers of Reality in Schubert Calculus, \textit{Bulletin of the AMS}, 47, No 1.\ (2010), pp.\ 31--71.

\bibitem{Stanley} R.~Stanley, \textit{Enumerative Combinatorics, Vol. 2}, Cambridge University Press, 1999.

\bibitem{Stembridge} J.~Stembridge, Shifted tableaux and the projective representations of the symmetric group, \textit{Advances in Mathematics}, Vol.\ 74, Issue 1, pp.\ 87--134.

\bibitem{Tuomas} T.~Tajakka, \textit{Cohomology of the Grassmannian}, Master's Thesis, Aalto University, 2015.

\bibitem{ThomasYong} H.~Thomas, A.~Yong, A combinatorial rule for (co)minuscule Schubert calculus, \textit{Advances in Mathematics}, Vol.\ 222, Issue 2, 2009, pp.\ 596--620.

\bibitem{Tymoczko} J.~Tymoczko, \textit{Decomposing Hessenberg varieties over classical groups}, Ph.D.\ Thesis, \href{https://arxiv.org/abs/math/0211226}{arxiv:0211226}.

\bibitem{Vakil} R.~Vakil, A geometric Littlewood-Richardson rule, \textit{Annals of Math.}, Vol.\ 164, Issue 2 (2006), pp.\ 371--422.

\bibitem{Wolper} J.~S.~Wolper, A combinatorial approach to the singularities of Schubert varieties, \textit{Adv. Math.} 76 (1989), pp.\ 184--193.

\end{thebibliography}
\end{document}